\def\phi{\varphi }
\def\epsilon{\varepsilon}
\def\arcosh{{\rm arcosh}\>}
\theoremstyle{plain}
\newtheorem{theorem}{Theorem}[section]
\newtheorem{lemma}[theorem]{Lemma}
\newtheorem{proposition}[theorem]{Proposition}
\theoremstyle{definition}
\newtheorem{definition}[theorem]{Definition}
\newtheorem{remark}[theorem]{Remark}
\newtheorem{example}[theorem]{Example}
\numberwithin{equation}{section}
\begin{document}

\title{ Product formulas for a two-parameter family of Heckman-Opdam
hypergeometric functions of type BC}
\author{
Michael Voit\\
Fakult\"at Mathematik, Technische Universit\"at Dortmund\\
          Vogelpothsweg 87,
          D-44221 Dortmund, Germany\\
e-mail:  michael.voit@math.tu-dortmund.de}
\date{\today}

\maketitle

\begin{abstract}
In this paper we present explicit product formulas for a continuous
two-parameter family of Heckman-Opdam hypergeometric functions of type $BC$ on
  Weyl chambers $C_q\subset \mathbb R^q$ of type $B$. 
These formulas are related to  continuous one-parameter families of 
probability-preserving convolution
structures on $C_q\times\mathbb R$. 
These convolutions on $C_q\times\mathbb R$ are constructed via  product
formulas for the spherical functions of the
 symmetric spaces $U(p,q)/ (U(p)\times SU(q))$ and associated  
  double coset convolutions on $C_q\times\mathbb T$ with the
torus $\mathbb T$.
We shall obtain positive
  product formulas 
for a restricted parameter set only, while the associated convolutions are
always norm-decreasing.

Our paper is related to recent positive 
product formulas of R\"osler for three
series of Heckman-Opdam hypergeometric functions of type $BC$ as well as 
to classical product formulas for Jacobi functions of Koornwinder and Trimeche 
for  rank $q=1$. 
\end{abstract}

\smallskip
\noindent
Key words: Hypergeometric functions associated with root systems, 
 Heckman-Opdam theory, hypergroups, product formulas, Grassmann manifolds, 
 spherical functions, signed hypergroups, Haar measure.

\noindent
AMS subject classification (2000): 33C67, 43A90, 43A62, 33C80.

%%%%%%%%%%%%%%%%%%%%%%%%%%

\section{Introduction}
 It is well-known by the work of Heckman and Opdam
 (\cite{H}, \cite{HS}, \cite{O1}, \cite{O2}) that the spherical functions on
 the Grassmann manifolds of rank $q\ge1$ over the fields 
$\mathbb F=\mathbb R, \mathbb C, \mathbb H$ may be regarded as 
  Heckman-Opdam hypergeometric functions $F_{BC_q}$ of type BC on the closed 
Weyl chambers 
$$C_q:=\{t=(t_1,\cdots,t_q)\in\mathbb  R^q: \> t_1\ge t_2\ge\cdots\ge t_q\ge0\}$$
of type $B$. The associated product formulas for the spherical functions were
stated by R\"osler \cite{R2} for the dimensions $p\ge 2q$ in a form such that
these formulas can be extended by some principle of analytic continuation to
all parameters $p\in\mathbb R$ with $p> 2q-1$. In this way,  R\"osler
\cite{R2} obtained three continuous series of product formulas for  $F_{BC_q}$
(for $\mathbb F=\mathbb R, \mathbb C, \mathbb H$) as well as associated
commutative, probability-preserving convolution algebras of measures on $C_q$, so-called
commutative hypergroups. For the theory of hypergroups we refer to \cite{J}
and \cite{BH}.

In this paper we start with  the symmetric spaces $U(p,q)/(U(p)\times
SU(q))$ for $\mathbb F=\mathbb C$ and $p\ge 2q$. Here the spherical functions can be
regarded as functions on $C_q\times\mathbb T$ with the
torus $\mathbb T:=\{z\in\mathbb C:\> |z|=1\}$. These functions
  can  be expressed in terms of $K$-spherical functions of the Hermitian symmetric spaces
$U(p,q)/(U(p)\times U(q))$  and thus also in terms
 of the functions $F_{BC_q}$ depending on the
integer multiplicity parameter $p\ge 2q$ and some spectral parameter $l\in\mathbb Z$;
 see \cite{Sh}, \cite{HS}. Following \cite{R2}, we shall write down
 product formulas for the spherical functions of $U(p,q)/(U(p)\times SU(q))$
 in Section 2 of this paper as  product formulas on $C_q\times\mathbb T$ for
 integers  $p\ge 2q$. We then use these formulas in Section 3 to construct 
 associated product formulas on the universal covering  $C_q\times\mathbb R$ of
 $C_q\times\mathbb T$ for a class of functions which are defined in terms of 
the functions  $F_{BC_q}$ where the $F_{BC_q}$ depend now on two continuous 
multiplicity parameters $p\ge 2q-1$ and $l\in\mathbb R$. Here, the
extension from integers $p\ge 2q$ and $l\in\mathbb Z$ to real numbers  $p>
2q-1$ and $l\in\mathbb R$
is carried out by
Carleson's theorem, a  principle of analytic continuation. The degenerated 
limit case  $p= 2q-1$ then follows by continuity. We shall also see in
Section 4 that these product formulas on  $C_q\times\mathbb R$ for
$p\ge 2q-1$
lead to commutative hypergroup structures $(C_q\times\mathbb R, *_p)$.
We shall also derive the Haar measures of these hypergroups.

The product formulas on  $C_q\times\mathbb R$ and the
associated hypergroup structures in Sections 3 and 4 form the basis to derive
a lot of further product formulas and convolution algebras by taking suitable
quotients. In particular, we immediately obtain extensions of the product formulas 
on  $C_q\times\mathbb T$ for the
spherical functions of $U(p,q)/(U(p)\times SU(q))$ with integers $p\ge 2q$ to
real parameters $p\ge 2q-1$. Moreover, we  recover R\"osler's formulas in \cite{R2} on
$C_q$ for $\mathbb F=\mathbb C$. More generally, 
we obtain explicit product formulas and convolution structures on $C_q$ for
all hypergeometric functions $F_{BC_q}$ with multiplicities
$$k=(k_1,k_2,k_3)= (p-q-l,1/2+l, 1)$$
with real parameters $p\ge 2q-1$ and $l\in\mathbb R$.
It will turn out in Section 5 that these formulas lead to probability-preserving
convolutions  and thus classical commutative hypergroups for $|l|\le 1/q$
while for arbitrary $l\in\mathbb R$ the positivity of the product formulas remain
open. On the other hand we shall see in Section 6 that the product formulas for 
 $F_{BC_q}$ lead for all $l\in\mathbb R$ at least to norm-decreasing convolution
algebras which are associated with certain so-called  signed hypergroup structures on
$C_q$. For this notion we refer to \cite{R0}, \cite{RV}, and references cited there.

We also notice that for rank $q=1$, our results are closely related with the classical
work of Flensted-Jensen and  Koornwinder (\cite{F}, \cite{FK}, \cite{K}) on
Jacobi functions and to the convolutions of Trimeche \cite{T} on the space
$\{z\in\mathbb C:\> |z|\ge 1\}$ which is homeomorphic with
$[0,\infty[\times\mathbb T$. 

Before starting with the analysis of $(U(p,q)/(U(p)\times SU(q))$ in Section
2, we recapitulate some notions and facts.
For integers $p>q\ge1$
consider the Grassmann manifolds $G/K$ over  $\mathbb F=\mathbb R, \mathbb C, \mathbb H$
with 
$$G= SO_0(p,q),\, SU(p,q),\, Sp(p,q)$$
 and the maximal  compact subgroups  $$K= 
SO(p)\times SO(q), \, S(U(p)\times U(q)) , \, Sp(p)\times Sp(q),$$
respectively.
By the well-known $KAK$ decomposition of $G$, a system of representatives 
of the $K$-double cosets on $G$ is given by the matrices 
\begin{equation}\label{a-t-def}
a_t = \, \begin{pmatrix} I_{p-q}&0&0\\0 & \cosh \underline t & \sinh \underline t
 \\0 &\sinh \underline t & \cosh \underline t \end{pmatrix}
\end{equation}
with $t$ in the closed Weyl chamber 
$C_q$ where  $\cosh\underline t, \> \sinh\underline t$ are the
 $q\times q$ diagonal matrices
$$\cosh\underline t:=diag(\cosh t_1,\ldots,\cosh t_q),\>\> 
 \sinh\underline t:=diag(\sinh t_1,\ldots,\sinh t_q).$$
Therefore, continuous $K$-biinvariant functions on $G$ are in  a natural one-to-one
correspondence with continuous functions on $C_q$. Moreover, by the
theory of Heckman and Opdam \cite{H}, \cite{HS}, \cite{O1}, \cite{O2}, in
this way  the
spherical functions of $(G,K)$, i.e., the continuous, $K$-biinvariant functions
$\phi\in C(G)$ satisfying the product formula
\begin{equation}\label{prod-formula-allg}
\phi(g)\phi(h)=\int_K \phi(gkh)\> dk \quad\quad(g,h\in G,\> dk\>\> \text{the
  normalized Haar measure of}\>\> K),
\end{equation}
 are precisely  the Heckman-Opdam hypergeometric functions
$$ t\mapsto F(\lambda,k;t):=F_{BC_q}(\lambda,k;t) \quad\quad (t\in C_q)$$
of type BC with $\lambda\in\mathbb C^q$ and the multiplicity parameter
$$k=(k_1,k_2,k_3)=(d(p-q)/2, (d-1)/2, d/2)$$
associated with the roots $\pm 2 e_i$,  $\pm 4 e_i$, and  $2(\pm  e_i\pm
e_j)$ of the root system  $2\cdot BC_q$
in the notion of Heckman and Opdam. Here, $d\in\{1,2,4\}$ is  the dimension of
  $\mathbb
R,\mathbb C,\mathbb H$ over $\mathbb R$; see Remark 2.3 of \cite{H} and also
 \cite{R2}.

For given $q\ge1$ and $d$, R\"osler
 \cite{R2}  derived the  product formula
 (\ref{prod-formula-allg})
explicitly as a product formula for the corresponding $F_{BC_q}$
  on $C_q$ depending
 on  $p\ge 2q$ 
 such that this formula can be extended  to a product
 formula   on $C_q$ for 
arbitrary real parameters $p> 2q-1$ by  analytic continuation.
Moreover, for  each  real parameter $p> 2q-1$, each of these product
formulas  gives rise to a
commutative hypergroup structure on   $C_q$, i.e., a
probability preserving commutative Banach-*-algebra structure on the Banach
space of all bounded signed Borel measures on  $C_q$ with total variation norm;
 see \cite{BH} and \cite{J} for the theory of hypergroups.

We now  modify the approach of \cite{R2}  for
 $\mathbb F=\mathbb C$, i.e.  $d=2$,  by considering $K$-spherical functions 
according to Ch. I.5 of \cite{HS} and \cite{Sh} as follows: 
 Take the Gelfand pair 
$( G, \tilde K):=(U(p,q), U(p)\times SU(q))$  for $p\ge q$ as well as
the maximal compact subgroup $K:=U(p)\times U(q)\subset K$.
Then 
$$G/K:=U(p,q)/(U(p)\times U(q)) \equiv SU(p,q)/S(U(p)\times
U(q))$$ is a Hermitian symmetric space.
It is well-known that the usual  spherical functions 
of $( G, \tilde K)$ (i.e., satisfying (\ref{prod-formula-allg}))
 are
in a natural way in a  one-to-one correspondence with the so-called
 $K$-spherical functions on $G$ of type $l$ with  
$l\in\mathbb Z$. Recapitulate that these
 $K$-spherical functions of type $l$ are defined as continuous functions
 $\phi\in C(G)$
with $\phi(e)=1$ satisfying twisted invariance conditions 
as well as  twisted  product formulas associated with the characters
\begin{equation}\label{twist-character}
\chi_l\left(\begin{pmatrix} u&0\\0&v\end{pmatrix}\right):=(\Delta v)^l
\quad\quad (l\in\mathbb Z)
\end{equation}
of $K$, where $\Delta v$ stands for the determinant of $v$. The twisted invariance condition 
reads as
\begin{equation}\label{invariance-twist}
\phi\left(k_1gk_2\right)
= \chi_l(k_1k_2)^{-1}\cdot \phi(g)
\quad\text{for}\quad
g\in G ,k_1,k_2\in K,
\end{equation} 
and the  twisted  product formula as
\begin{equation}\label{prod-formula-allg-twist}
\phi(g)\phi(h)=\int_{S(U(p)\times U(q))}  \chi_l(k)   \phi(gkh)\>
 dk \quad\quad(g,h\in  G).
\end{equation}
These spherical functions $\phi$  of type $l$ can be also
characterized as eigenfunctions of some algebra $\mathbb D(\chi_l)$ of $SU(p,q)$-invariant 
differential operators in  Section 5.1 of \cite{HS} and \cite{Sh} and can be
written down therefore explicitly in terms of $F_{BC_q}$. 
In particular, if $G//\tilde K$ is identified with $C_q\times\mathbb T$,
we conclude from Theorem 5.2.2 of \cite{HS} that 
the spherical functions  may be regarded as the functions
\begin{equation}\label{def-phi-lam-l}
(t,z)\longmapsto z^l\prod_{j=1}^q cosh^{l}t_j \cdot F_{C_q}(\lambda,k(p,l);t)
\quad\quad(\lambda\in\mathbb C^q,\> l\in\mathbb Z)
\end{equation}
 with the   multiplicities
\begin{equation}\label{def-twisted-k}
k(p,l)=(k_1(p,l),k_2(p,l),k_3(p,l))=(p-q-l, 1/2+l, 1).
\end{equation}
We shall use this characterization in the next section to derive an explicit
product formula for these functions.

It is a pleasure to thank Tom Koornwinder for some essential 
 hints to details in the monograph
\cite{HS}  as well Margit R\"osler for many
fruitful discussions.

\section{Spherical functions of $(U(p,q), U(p)\times SU(q))$ and their product
  formula}

In this section we derive an explicit  product formula for the
 spherical functions for the Gelfand pair $(G,\tilde K):= (U(p,q), U(p)\times SU(q))$.
 In fact, this is a
Gelfand pair by standard criteria, see e.g. Corollary 1.5.4 of
\cite{GV}. Moreover, this is also a direct consequence of the explicit
convolution (\ref{group-torus-convo}) below. 
We first identify the double coset space $G//\tilde K$ with the direct product 
$C_q\times \mathbb T$ of the Weyl chamber $C_q$ and the torus
$\mathbb  T$.
 This can be done similar to well known 
case of the Hermitian symmetric space $G/K:=U(p,q)/(U(p)\times U(q))$ where, by
the  $KAK$-decomposition, a system of representatives of the $K$-double
cosets in $G$ is given by the matrices $a_t$ of Eq.~(\ref{a-t-def}) with $t\in
C_q$.

\begin{lemma} A system of representatives of the $\tilde K$-double
cosets in $G$ is given by the matrices
\begin{equation}\label{a-t-z-def}
a_{t,z} = \, \begin{pmatrix} I_{p-q}&0&0\\0 &  r_q(z)^{-1}\cdot \cosh \underline t & \sinh \underline t
 \\0 &\sinh \underline t &  r_q(z)\cdot\cosh \underline t \end{pmatrix}
\end{equation}
for $t\in C_q$ and $z\in\mathbb T$ where the mapping $r_q:\mathbb T\to\mathbb T
$
 is the $q$-th root on $ \mathbb T$
with $r_q(e^{it}):=e^{it/q}$ with $t\in[0,2\pi[$.
\end{lemma}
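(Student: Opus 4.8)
The plan is to analyze the $\tilde K$-double cosets in $G = U(p,q)$ by comparing them with the known $K$-double cosets, where $K = U(p)\times U(q)$. We already know from the $KAK$-decomposition that every $g\in G$ can be written as $g = k_1 a_t k_2$ with $k_1,k_2\in K$ and $t\in C_q$. Since $\tilde K = U(p)\times SU(q)$ is a subgroup of $K$ of codimension one (the missing directions being the determinant on the $SU(q)$-factor), the $\tilde K$-double cosets should refine the $K$-double cosets by an extra circle parameter. First I would write an arbitrary element $k\in K$ as $k = (u,v)$ with $u\in U(p)$, $v\in U(q)$, and record that $v = w\cdot \mathrm{diag}(\Delta v,1,\dots,1)$ for a suitable $w\in SU(q)$ after absorbing the determinant; the idea is that modulo $\tilde K$ on both sides, the only residual freedom beyond the $K$-action is captured by $\det$ on the two $U(q)$-blocks.

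The key computation is to see how the determinant degrees of freedom act on the matrix $a_t$ from both sides. Concretely, I would take $g = a_t$ and conjugate/multiply by diagonal-type elements of $K$ that lie in the same $\tilde K$-coset, and verify that the orbit of $a_t$ under left and right multiplication by $\tilde K$, together with the extra $U(1)$ that distinguishes $K$ from $\tilde K$, produces precisely the one-parameter family $a_{t,z}$ in \eqref{a-t-z-def}. The factor $r_q(z)$ (the $q$-th root) should arise because the determinant on the $q\times q$ blocks distributes the phase $z$ evenly, i.e. an element of $SU(q)$ contributing a diagonal phase must have that phase to the $q$-th power equal to the total determinant twist, which is exactly why $r_q(e^{it}) = e^{it/q}$ appears. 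I would check directly that $a_{t,z}$ indeed lies in $G = U(p,q)$ by confirming it preserves the relevant indefinite Hermitian form, which reduces to the block identity $\cosh^2\underline t - \sinh^2\underline t = I_q$ together with $|r_q(z)|=1$.

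The two remaining points are completeness and a suitable form of uniqueness. For completeness, I would argue that every $\tilde K$-double coset contains some $a_{t,z}$: starting from $g = k_1 a_t k_2$ in the $K$-decomposition, I peel off the $SU(q)$-parts of $k_1,k_2$ (which lie in $\tilde K$ and can be moved to the sides), leaving only the $U(1)$-determinant ambiguity on the two $U(q)$-blocks, whose net effect on $a_t$ is exactly the insertion of $r_q(z)^{\pm 1}$ as in \eqref{a-t-z-def}. The main obstacle I expect is bookkeeping the determinant phases correctly: one must show that the single residual $U(1)$ parameter cannot be absorbed back into $\tilde K$ and genuinely labels distinct double cosets, so that $z\in\mathbb T$ is a true invariant (up to the Weyl-group action already accounted for in passing to $C_q$). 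This amounts to checking that the $t$-coordinates are determined as before by the singular values of the hyperbolic block, while $z$ is determined by the product of the phases of the two diagonal blocks, which is invariant under $\tilde K$ precisely because $\tilde K$ only allows $SU(q)$ on the second factor.

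Once these facts are assembled, the statement follows: the matrices $a_{t,z}$ with $t\in C_q$, $z\in\mathbb T$ meet every $\tilde K$-double coset, and the pair $(t,z)$ is an invariant of the coset, so they form a complete system of representatives and $G//\tilde K$ is identified with $C_q\times\mathbb T$.
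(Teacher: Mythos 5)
Your overall strategy is the paper's: start from the $KAK$-decomposition, split each $U(q)$-factor into a scalar phase times an element of $SU(q)$, absorb the $SU(q)$-parts into $\tilde K$, and push the scalars through $a_t$ to land on the representatives $a_{t,z}$. The $q$-th root $r_q$ appears exactly as you say: the scalar $\zeta I_q$ with $\zeta^q=\Delta v$ is determined only up to a $q$-th root of unity, and $z_0I_q\in SU(q)$ whenever $z_0^q=1$, so only $\zeta^q=z$ survives as a coset label. The completeness half of your argument is therefore sound.

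The one place where your plan is not yet a proof is the separation of cosets. The quantity you propose as the $z$-invariant --- ``the product of the phases of the two diagonal blocks'' --- is either ill-defined or does not separate: if it means the determinant phases of the two $U(q)$-factors $v_1,v_2$ in a decomposition $g=k_1a_tk_2$, you must first show that this product does not depend on the (highly non-unique) choice of $k_1,k_2$, which you have not done; if it means the two diagonal blocks $A(g),D(g)$ of $g$ itself, then ${\rm arg}\,\Delta(A(g))$ is not $\tilde K$-invariant (left and right multiplication by $U(p)$ changes it by an arbitrary phase), and on the representatives one computes ${\rm arg}\,\Delta(A_{t,z})\cdot {\rm arg}\,\Delta(D_{t,z})=z^{-1}\cdot z=1$, which sees nothing. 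The paper's fix is to use only the lower right $q\times q$ block: under $g\mapsto \bigl(\begin{smallmatrix}u&0\\0&v\end{smallmatrix}\bigr)g\bigl(\begin{smallmatrix}u'&0\\0&v'\end{smallmatrix}\bigr)$ with $v,v'\in SU(q)$ one has $D(g)\mapsto vD(g)v'$, so both $\sigma_{sing}(D(g))$ and ${\rm arg}(\Delta(D(g)))$ are manifestly constant on $\tilde K$-double cosets, and evaluating them on $a_{t,z}$ returns $t$ (via $\arcosh$) and $z$, since $\Delta(r_q(z)\cosh\underline t)=z\prod_{j}\cosh t_j$. Replacing your invariant by this one closes the argument; everything else in your proposal matches the paper's proof.
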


\begin{proof} We first check that each double coset has a representative of the form $a_{t,z}$.
In fact, by the well known $KAK$-decomposition of $G$, each $g\in G$ has the form
$$g=\begin{pmatrix} u_1&0\\0&v_1\end{pmatrix}
\begin{pmatrix} I_{p-q}&0&0\\0&\cosh \underline t&\sinh \underline t
\\0&\sinh \underline t&\cosh \underline t
 \end{pmatrix}
\begin{pmatrix} u_2&0\\0&v_2\end{pmatrix}$$
with $t\in C_q$, $u_1,u_2\in U(p)$, $v_1,v_2\in U(q)$. The matrices  $v_k\in U(q)$ ($k=1,2$)
can
be written as $v_k=z_k\cdot\tilde v_k$ with  $\tilde v_k\in SU(q)$ and
$z_k=r_q(\Delta(v_k))\in\mathbb T$. 
Therefore, defining $$\tilde u_1:=u_1\begin{pmatrix} I_{p-q}&0\\0&
z_2I_q\end{pmatrix}  \quad\quad\text{and}\quad\quad
\tilde u_2:=\begin{pmatrix} I_{p-q}&0\\0&
z_1I_q\end{pmatrix}u_2,$$ we obtain
\begin{equation}\label{elem-double}
g=\begin{pmatrix}\tilde  u_1&0\\0&\tilde v_1\end{pmatrix}
\begin{pmatrix} I_{p-q}&0&0\\0& (z_1z_2)^{-1}\cosh \underline t&\sinh \underline t
\\0&\sinh \underline t& z_1z_2\cosh \underline t
 \end{pmatrix}
\begin{pmatrix}\tilde u_2&0\\0&\tilde v_2\end{pmatrix},
\end{equation}
which shows that each double coset has a representative of the form 
\begin{equation}\label{elem-double-modified}
\tilde a_{t,z}:=\begin{pmatrix} I_{p-q}&0&0\\0& z^{-1}\cosh \underline t&\sinh \underline t
\\0&\sinh \underline t& z\cosh \underline t
 \end{pmatrix}
\end{equation}
with $z\in\mathbb T$. Moreover, this computation also shows that for all $q$-th roots of unity
$z_0\in\mathbb T$, the matrices $\tilde a_{t,z}$ and
 $\tilde a_{t,zz_0}$ are contained in the same $\tilde K$-double coset,
 i.e., each double coset has a representative of the form  $\tilde a_{t,z}$
 with $z=e^{i\theta}$, $\theta\in[0,2\pi/q[$, as claimed.

In order to show that the $a_{t,z}$ are contained in different double cosets
 for different $(t,z)$, we briefly discuss how the parameters $t,z$ of a double coset
of a  arbitrary group element $g\in G$ can be constructed explicitly.
 This will be important also later on for the convolution.
We write any $g\in G$ in $(p\times q)$-block notation as
$$ g = \begin{pmatrix} A(g) & B(g)\\
C(g) & D(g)     \end{pmatrix}.$$
Moreover, in $(p\times q)$-block notation, we write
$$ a_{t,z} = \,\begin{pmatrix} A_{t,z} & B_{t,z}\\
C_{t,z} & D_{t,z}     \end{pmatrix}.$$
Assume now that $g\in G$ has the form (\ref{elem-double}) with
$z_1z_2=r_q(z)$ with  $z=e^{i\theta}$, $\theta\in[0,2\pi/q[$.
Then  $D(g)$ has the form
\begin{equation}\label{Dg}
D(g)=r_q(z)\cdot\tilde v_1 \> \cosh \underline t\> \tilde v_2
\end{equation}
with $\tilde v_1,\tilde v_2\in SU(q)$.
We now consider the singular spectrum
$$\sigma_{sing}:M^{q,q}(\mathbb C)\to C_q, \quad
\sigma_{sing}(a):=\sqrt{spec(a^*a)}\in\mathbb R^q$$
where the singular values are ordered by size. We also consider the map
$arg:\mathbb C^\times\to\mathbb T$, $ arg(z):=z/|z|$.
Then, by (\ref{Dg}), 
\begin{equation}\label{ident}
t=\arcosh(\sigma_{sing}(D(g))) \quad \text{in all components} \quad\quad\text{and} \quad\quad
z=arg(\Delta(D(g))).
\end{equation}
This completes the proof of the lemma.
\end{proof}

We proceed with the notations of the second part of the proof of the lemma
 and write the general product formula (\ref{prod-formula-allg}) 
for spherical functions as a product formula on the parameter
 space $C_q\times \mathbb T$.
For this, we take $t,s\in C_q$ and $z_1,z_2\in\mathbb T$ and  evaluate the integral 
$$\int_{\tilde K} f(a_{t,z_1}ka_{s,z_2})\>dk
 \quad\quad\text{with} \quad\quad
k= \begin{pmatrix} u & 0\\
0 & v     \end{pmatrix}.$$ 
We have
$$a_{t,z_1}ka_{s,z_2}=\begin{pmatrix} * & *\\
* & C_{t,z_1}\>u\>B_{s,z_2} + D_{t,z_1}\>v\>D_{s,z_2}  \end{pmatrix}$$
and thus
$$D(a_{t,z_1}\>k\>a_{s,z_2})=C_{t,z_1}\>u\>B_{s,z_2} + D_{t,z_1}\>v\>D_{s,z_2}= 
(0,\sinh\underline t)\>u\>\begin{pmatrix}0\\ \sinh\underline s\end{pmatrix}
+r_q(z_1z_2)\>\cosh \underline t\> v\> \cosh \underline s .$$
With the block matrix 
\begin{equation}\label{def-sigma-0}
 \sigma_0 := \begin{pmatrix}0\\ I_q\end{pmatrix} \in M_{p,q}(\mathbb C)
\end{equation}
this can be written as 
$$ D(a_{t,z}\>k\>a_{s,w}) = \,\sinh\underline t \,\sigma_0^* u
\sigma_0\sinh\underline s \,+\,
r_q(z_1z_2)\>\cosh \underline t\> v\> \cosh \underline s .$$
Therefore, if we regard a $\tilde K$-biinvariant function $f\in C(G)$ as a continuous
function on $C_q\times\mathbb T$,
\begin{align}
\int_{\tilde K}& f(a_{t,z_1}ka_{s,z_2})\>dk=\notag\\
=&\int_{\tilde K} f\left(\arcosh(\sigma_{sing}(D(a_{t,z_1}\>k\>a_{s,z_2}))), 
arg(\Delta(D(a_{t,z_1}\>k\>a_{s,z_2})))\right)
\notag\\
=&
\int_{U(p)}\int_{SU(q)} f\Bigr(\arcosh(\sigma_{sing}(\sinh\underline t \,\sigma_0^* u
\sigma_0\sinh\underline s \,+\,
r_q(z_1z_2)\>\cosh \underline t\> v\> \cosh \underline s
)), \notag\\
&\quad\quad\quad\quad\quad\quad\quad\quad
arg(\Delta(   \sinh\underline t \,\sigma_0^* u
\sigma_0\sinh\underline s \,+\,
r_q(z_1z_2)\>\cosh \underline t\> v\> \cosh \underline s     ))\Bigl)\> dv\> dw.
\notag 
\end{align}
Notice that $\,\sigma_0^* u \sigma_0\in M_q(\mathbb F)$ is just
 the lower right $q\times q$-block of $\sigma$ and is contained in the ball
$$B_q:=\{w\in M^{q,q}(\mathbb C):\> w^*w\le I_q\},$$
where $w^*w\le I_q$ means that $I_q-w^*w$ is positive semidefinite. 
In order to reduce the $U(p)$-integration, 
we use  Lemma 2.1 of \cite{R2} and obtain that for $p\ge 2q$ the integral above
is equal to
\begin{align}
\frac{1}{\kappa_p}\int_{B_q}\int_{SU(q)} 
f\Bigr(\arcosh(\sigma_{sing}(&\sinh\underline t \,w\,\sinh\underline s \,+\,
r_q(z_1z_2)\>\cosh \underline t\> v\> \cosh \underline s
)),  \\
arg(\Delta(\sinh\underline t \,w\,\sinh\underline s& \,+\,
r_q(z_1z_2)\>\cosh \underline t\> v\> \cosh \underline s
))\Bigl)\cdot
\Delta(I_q-w^*w)^{p-2q}
\> dv\> dw.\notag
\end{align}
 with 
\begin{equation}
\kappa_p:=\int_{B_q}\Delta(I_q-w^*w)^{p-2q}\> dw,
\end{equation}
 and where
 $dw$ means integration w.r.t.~the Lebesgue measure. After substitution $w\mapsto
 r_q(z_1z_2)w$, we arrive at the following explicit product formula:

\begin{proposition}\label{prop-group-torus-convo}
Let $p\ge 2q$.
If a $\tilde K$-spherical function $\phi\in C(G)$ is regarded as a continuous
function on $C_q\times\mathbb T$ as described above, then the associated product
formula for spherical functions $\phi$ has the following form on  $C_q\times\mathbb T$:
\begin{align}\label{group-torus-convo}
\phi(t,z_1)\phi(s,z_2)=\quad\quad &\\
=\frac{1}{\kappa_p}\int_{B_q}\int_{SU(q)} 
\phi\Bigr(\arcosh(\sigma_{sing}(&\sinh\underline t \,w\,\sinh\underline s \,+\,
\>\cosh \underline t\> v\> \cosh \underline s
)),\notag  \\
z_1z_2\cdot arg(\Delta(&\sinh\underline t \,w\,\sinh\underline s \,+\,
\>\cosh \underline t\> v\> \cosh \underline s))\Bigl)\cdot
\Delta(I_q-w^*w)^{p-2q}
\> dv\> dw.\notag
\end{align}
\end{proposition}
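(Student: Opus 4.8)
The plan is to take the general spherical product formula (\ref{prod-formula-allg}) for the Gelfand pair $(G,\tilde K)$ and rewrite it explicitly in the coordinates $(t,z)$ provided by Lemma 2.1. I would fix $t,s\in C_q$ and $z_1,z_2\in\mathbb T$, parametrize $k\in\tilde K$ as $k=\begin{pmatrix} u&0\\0&v\end{pmatrix}$ with $u\in U(p)$, $v\in SU(q)$ and $dk=du\,dv$ a product of normalized Haar measures, and evaluate $\int_{\tilde K}\phi(a_{t,z_1}ka_{s,z_2})\,dk$. Since $\phi$ is $\tilde K$-biinvariant, the reconstruction formula (\ref{ident}) lets me replace $\phi$ at a group element $g$ by $\phi\bigl(\arcosh(\sigma_{sing}(D(g))),\,arg(\Delta(D(g)))\bigr)$, so it suffices to compute the lower-right block $D(a_{t,z_1}ka_{s,z_2})$. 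A direct block multiplication using (\ref{a-t-z-def}), together with $\sigma_0=\begin{pmatrix}0\\I_q\end{pmatrix}$, gives $D(a_{t,z_1}ka_{s,z_2})=\sinh\underline t\,\sigma_0^*u\sigma_0\,\sinh\underline s+r_q(z_1z_2)\,\cosh\underline t\,v\,\cosh\underline s$, where the two phases $r_q(z_1),r_q(z_2)$ combine into a single unit scalar whose $q$-th power equals $z_1z_2$.

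The central step is then to carry out the $U(p)$-integration. Writing $w:=\sigma_0^*u\sigma_0$ for the lower-right $q\times q$ block of $u$, this block ranges over the matrix ball $B_q$ as $u$ runs over $U(p)$. The key input is Lemma 2.1 of \cite{R2}, which identifies the push-forward of the normalized Haar measure of $U(p)$ under $u\mapsto\sigma_0^*u\sigma_0$ as the probability measure $\kappa_p^{-1}\Delta(I_q-w^*w)^{p-2q}\,dw$ on $B_q$. This is precisely where the hypothesis $p\ge 2q$ enters: the exponent $p-2q$ must be nonnegative for this density to be a genuine (bounded) weight. Invoking this replaces the $du$-integral over $U(p)$ by a weighted Lebesgue integral over $B_q$ while leaving the $SU(q)$-integral in $v$ untouched.

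Finally I would perform the substitution $w\mapsto r_q(z_1z_2)w$, which is measure-preserving on $B_q$ and leaves the weight $\Delta(I_q-w^*w)$ invariant because $r_q(z_1z_2)\in\mathbb T$. The matrix inside then factors as $r_q(z_1z_2)\bigl(\sinh\underline t\,w\,\sinh\underline s+\cosh\underline t\,v\,\cosh\underline s\bigr)$. Multiplication by the unit scalar $r_q(z_1z_2)$ does not change the singular spectrum, so it disappears from the $\arcosh\,\sigma_{sing}$ slot; and since $\Delta(cM)=c^q\Delta(M)$ while $r_q(z_1z_2)^q=z_1z_2$, it contributes exactly the external factor $z_1z_2$ in the $arg\,\Delta$ slot, giving (\ref{group-torus-convo}). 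I expect the main obstacle to be the measure-reduction step via Lemma 2.1 of \cite{R2}: it is the only place where the dimension constraint $p\ge 2q$ is genuinely used, and one must correctly identify the Jacobi-type density of a truncated Haar-random unitary. By contrast, the phase bookkeeping through $r_q$ and $\Delta$ is routine once one records that $r_q(z)^q=z$ and that multiplication by a phase leaves singular values unchanged.
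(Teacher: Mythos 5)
Your proposal is correct and follows essentially the same route as the paper: compute the lower-right block $D(a_{t,z_1}ka_{s,z_2})=\sinh\underline t\,\sigma_0^*u\sigma_0\,\sinh\underline s+r_q(z_1)r_q(z_2)\cosh\underline t\,v\,\cosh\underline s$, reduce the $U(p)$-integral to the ball $B_q$ via Lemma 2.1 of \cite{R2}, and then substitute $w\mapsto r_q(z_1z_2)w$ to pull the phase out as the factor $z_1z_2$ in the $arg\,\Delta$ slot while leaving the singular spectrum and the weight unchanged. The phase bookkeeping (using only that the combined scalar has $q$-th power $z_1z_2$) and the identification of where $p\ge 2q$ is needed both match the paper's argument.
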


Notice that for $p\le 2q-1$, the integral over the ball
 $B_q$ in (\ref{group-torus-convo}) does not exist and that here $ \kappa_p=\infty$. However, 
for $p=2q-1$, a degenerated version of formula may be written down explicitly similar 
to other series of symmetric spaces of higher rank associated with  motion groups or Heisenberg groups;
see Section 3  of \cite{R1} and Remark 2.14 of \cite{V2}. We shall present a degenerated version of 
(\ref{group-torus-convo})  in the end of Section 3.

 In the case of rank $q=1$, the space $C_q\times \mathbb T=[0,\infty[\times \mathbb T$ 
may be identified with the exterior $Z:=\{z\in\mathbb C:\> |z|\ge1\}$ of the unit disk
 via polar coordinates. The associated convolution 
(\ref{group-torus-convo}) in the general case $p\ge 2q$ and in the degenerated case $p = 2q$
was computed in this case  by Trimeche \cite{T}.

We next turn to the classification of all  $\tilde K$-spherical functions
$\phi\in C(G)$. In order to translate it into a standard version in
the Heckman-Opdam theory, we recapitulate the following result:

\begin{lemma}\label{euiv-def-spher}
 For $\phi\in C(G)$ the following properties are equivalent:
\begin{enumerate}
\item[\rm{(1)}] $\phi$ is $\tilde K$-spherical, i.e., $\tilde K$-biinvariant with 
 $\phi(g)\phi(h)=\int_{\tilde K} \phi(gkh)\> dk$ for $g,h\in G$.
\item[\rm{(2)}] $\phi(e)=1$, and there exists a unique $l\in\mathbb Z$ such that
\begin{equation}\label{twist-invar}
\phi(k_1gk_2)=\chi_l(k_1k_2)^{-1}\cdot \phi(g) \quad\quad\text{for}\quad\quad g\in G,\> k_1,k_2\in K
\end{equation}
and 
\begin{equation}\label{twist-prod}
\phi(g)\phi(h)=\int_{ K} \phi(gkh)\chi_l(k)\> dk \quad\quad\text{for}\quad\quad g,h\in G.
\end{equation}
\item[\rm{(3)}] $\phi$ is a spherical function of type $\chi_l$ for some $l\in\mathbb Z$
 in the sense of Definition 5.2.1 of Heckman \cite{HS}, i.e., $\phi$ is 
 an eigenfunction with respect to  all members of a certain algebra of 
$G$-invariant differential operators.
\end{enumerate}
\end{lemma}

\begin{proof} For abbreviation  define the diagonal matrix $d_z:= \begin{pmatrix} I_p & 0\\
0 & zI_q     \end{pmatrix}\in K$ for $z\in \mathbb T$.

For $(1)\Longrightarrow(2)$ consider a $\tilde K$-spherical function $\phi$. 
Then $\phi|_K$ is  $\tilde K$-spherical, and  as $K/\tilde K\equiv\mathbb T$, 
we find a unique $l\in\mathbb Z$ with $\phi|_K=\chi_l^{-1}$. Moreover, as 
$\phi(kg)=\phi(k)\phi(g)=\phi(gk)$ for $g\in G$ and $k\in K$, (\ref{twist-invar}) is clear. For
 (\ref{twist-prod}), we use the normalized Haar measure $dz$ on $\mathbb T$ and observe that for $g,h\in G$,
\begin{align}
\int_K \phi(gkh)\cdot \chi_l(k)\> dk&= \int_{\mathbb T}\int_{\tilde K}
 \phi\left(gd_zkh\right)\chi_l\left(d_z k\right)\> dk\> dz
 =\int_{\mathbb T} \phi\left(gd_z\right)\phi(h)\chi_l\left(d_z\right)\> dz
\notag\\
&= \phi(g)\phi(h)\int_{\mathbb T} \phi\left(d_z \right)\chi_l\left(d_z\right)\> dz \>=\> \phi(g)\phi(h)
\notag \end{align}
as claimed.

For $(2)\Longrightarrow(1)$ consider $\phi$ as in (2). Then by (\ref{twist-invar}),
 $\phi|_K=\chi_l^{-1}$, and  $\phi$ is 
$\tilde K$-biinvariant. Moreover, for $g,h\in G$ and $z\in \mathbb T$,
$\tilde K gd_z\tilde K =\tilde Kd_z g\tilde K$,
and thus
\begin{align}
\phi(g)\phi(h)&=
\int_K \phi(gkh)\cdot \chi_l(k)\> dk= \int_{\mathbb T}\int_{\tilde K}
 \phi\left(gd_zkh\right)\chi_l\left(d_zk\right)\> dk\> dz
\notag\\
 &= \int_{\mathbb T}\int_{\tilde K}
 \phi\left(d_zgkh\right)\chi_l\left(d_z\right)\> dk\> dz=\int_{\tilde K}\phi\left(gkh\right)\> dk\>
\notag \end{align}
as claimed.

The equivalence of (2) and (3) is already mentioned in Section 5.2 of Heckman \cite{HS} 
and can be checked in the same way as for classical spherical functions as it is carried out
 e.g. in Section IV.2 of Helgason \cite{Hel}.
\end{proof}

The   elementary spherical functions $\phi$ on $G$ of type $\chi_l$ for  $l\in\mathbb Z$
are classified by Heckman in Section 5 of \cite{HS}. For a description, we consider the
root system $R:=2\cdot BC_q$ with the positive roots
$$R_+:=\{2e_i,4e_i:\>  i=1,\ldots,q\}\cup\{ 2(e_i-e_j):\>  1\le i<j\le q\}$$
as well as the associated Heckman-Opdam hypergeometric functions according to
\cite{H}, \cite{HS}, \cite{O1}, \cite{O2} which we denote by
$F_{BC_q}(\lambda,k;t)$ with $\lambda\in\mathbb C^q$, $t\in C_q$, and with multiplicity
$k=(k_1,k_2,k_3)$ where the $k_i$ belong to the roots as in the ordering of
$R_+$ above. Note that the  Heckman-Opdam hypergeometric functions $F_{BC_q}(\lambda,k;t)$
exists for all  $\lambda\in\mathbb C^q$, $t\in C_q$ whenever the multiplicity $k$ is 
contained in some open regular subset $K^{reg}\subset\mathbb C^3$. It is well-known 
(see Remark 4.4.3 of \cite{HS}) that, in our notation,
\begin{equation}\label{Kreg}
\{k=(k_1,k_2,k_3):  \> {\it Re}\> k_3\ge0,\>   {\it Re}\> k_1+k_2\ge0\}\subset  K^{reg}.
\end{equation}
 Taking Lemma \ref{euiv-def-spher} into account, we obtain  the following known classification
from
Theorem 5.2.2 of
\cite{HS}:

\begin{theorem}\label{classification-spher-allg}
If $\tilde K$-spherical functions on $G$ are regarded as functions on
$C_q\times\mathbb T$ as above, then the  $\tilde K$-spherical functions on $G$ are
given precisely by
$$\phi_{\lambda,l}^p(t,z)=z^l\cdot\prod_{j=1}^q \cosh^l t_j\cdot
F_{BC_q}(i\lambda,k(p,q,l);t)$$
with $\lambda\in\mathbb C^q$, $l\in\mathbb Z$, and  the multiplicity
$$k(p,q,l)=(p-q-l,\> \frac12 +l,\> 1)\in K^{reg}.$$
\end{theorem}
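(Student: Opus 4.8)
The plan is to reduce the statement to the classification of spherical functions of type $\chi_l$ in \cite{HS} by means of Lemma \ref{euiv-def-spher}, and then to translate Heckman's parametrization into the explicit coordinates $(t,z)$ on $C_q\times\mathbb T$. By the equivalence (1)$\Leftrightarrow$(3) of Lemma \ref{euiv-def-spher}, a function $\phi\in C(G)$ is $\tilde K$-spherical if and only if $\phi(e)=1$ and $\phi$ is a spherical function of type $\chi_l$ for a unique $l\in\mathbb Z$ in the sense of Definition 5.2.1 of \cite{HS}, that is, a joint eigenfunction of the algebra $\mathbb D(\chi_l)$ of $G$-invariant differential operators satisfying the twisted invariance (\ref{twist-invar}). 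It therefore suffices to classify these functions and to rewrite them in the $(t,z)$-picture; moreover the word \emph{precisely} requires both that every $\tilde K$-spherical function has the stated form and, conversely, that each $\phi_{\lambda,l}^p$ is $\tilde K$-spherical, the latter because it is by construction an eigenfunction of type $\chi_l$ and hence $\tilde K$-spherical by the implication (3)$\Rightarrow$(1) of Lemma \ref{euiv-def-spher}.

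First I would separate the $\mathbb T$-variable. The quotient $K/\tilde K$ is the central torus $\{d_z\}\subset K$ whose $U(q)$-block is $zI_q$, so that $\chi_l(d_z)=(\Delta(zI_q))^l=z^{ql}$. Evaluating the twisted invariance (\ref{twist-invar}) on these elements, together with the description $z=arg(\Delta(D(g)))$ of the torus coordinate from the first lemma of this section, pins down the dependence of $\phi$ on $z$ up to the sign conventions fixed by (\ref{twist-character}) and (\ref{twist-invar}) and yields $\phi(t,z)=z^l\,\psi(t)$ for a function $\psi$ on $C_q$ alone. Thus the problem is reduced to identifying $\psi(t)=\phi(t,1)$.

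Next I would determine $\psi$. On the chamber $C_q$, parametrized by the representatives $a_t$ of the Hermitian symmetric space $G/K=U(p,q)/(U(p)\times U(q))$, the radial parts of the operators in $\mathbb D(\chi_l)$ act on $\psi$; the key point, which is exactly the content of Theorem 5.2.2 of \cite{HS}, is that conjugating these radial operators by the explicit positive gauge factor $\prod_{j=1}^q\cosh^l t_j$ turns them into the standard Heckman--Opdam hypergeometric operators of type $BC_q$ for the shifted multiplicity
$$k(p,q,l)=(p-q-l,\tfrac12+l,1),$$
obtained from the untwisted Hermitian multiplicity $(p-q,\tfrac12,1)$ (the case $d=2$) by the shift $(-l,+l,0)$ along the roots $2e_i,4e_i$, while $k_3$ is unchanged. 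Consequently $\psi(t)=\prod_{j=1}^q\cosh^l t_j\cdot F_{BC_q}(i\lambda,k(p,q,l);t)$, where $\lambda\in\mathbb C^q$ is the spectral parameter labelling the joint eigenvalue and the prefactor is fixed by $\phi(e)=1$. Combining this with the first step gives the claimed formula (\ref{def-phi-lam-l}).

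It remains to check that $F_{BC_q}(i\lambda,k(p,q,l);t)$ is well defined for all $\lambda$ and $t$, i.e.\ that $k(p,q,l)\in K^{reg}$. Here $k_3=1\ge0$ and $k_1+k_2=p-q+\tfrac12>0$ for $p\ge q$, so (\ref{Kreg}) applies for every $l\in\mathbb Z$. The main obstacle is the gauge computation of the third paragraph: verifying that the twisted invariant operators become, after conjugation by $\prod_{j=1}^q\cosh^l t_j$, the hypergeometric system with precisely the multiplicity shift $(-l,+l,0)$. This is the substantive input, and rather than reproduce it I would cite Theorem 5.2.2 of \cite{HS}, the remaining steps being the bookkeeping of the reduction via Lemma \ref{euiv-def-spher} and the change to the $(t,z)$-coordinates.
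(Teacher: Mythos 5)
Your proposal is correct and follows essentially the same route as the paper, which likewise reduces the classification to spherical functions of type $\chi_l$ via Lemma \ref{euiv-def-spher} and then invokes Theorem 5.2.2 of \cite{HS} for the explicit form in terms of $F_{BC_q}$ with the shifted multiplicity $k(p,q,l)$. The additional details you supply (separating the $\mathbb T$-variable via $\chi_l(d_z)=z^{ql}$ and the $q$-th root in the representatives $a_{t,z}$, the gauge factor $\prod_j\cosh^l t_j$, and the check that $k(p,q,l)\in K^{reg}$) are consistent with, and merely elaborate on, the argument the paper leaves to the citation.
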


\begin{example}\label{q1--example-1}
 For $q=1$, the parameter $k_3$ is irrelevant and usually suppressed. 
If one compares
  the one-dimensional example of Heckman-Opdam functions 
on p. 89f of \cite{O1} with the classical definition of the
  Jacobi functions $\phi_\lambda^{(\alpha,\beta)}$ with $\alpha=k_1+k_2-1/2$,
  $\beta=k_2-1/2$ e.g. in \cite{K}, one obtains
$$F_{BC_1}(i\lambda,k;t)=\phi_\lambda^{(\alpha,\beta)}(t);$$
see also Example 3.4 in \cite{R2}. Therefore, by Theorem \ref{classification-spher-allg}, the
$U(p)$-spherical functions on $U(p,1)$ are given by
$$\phi_{\lambda,l}^p(t,z)=z^l\cdot \cosh^l t\cdot \phi_{\lambda}^{(p-1,l)}(t)
\quad\quad(t\ge0, z\in\mathbb T)$$
with $l\in\mathbb Z$ and $\lambda\in\mathbb C$.
This is a classical result of Flensted-Jensen \cite{F}.
\end{example}

Let us summarize the results above. For integers $q\ge 2q$, we obtain from
Proposition \ref{prop-group-torus-convo} that the double coset convolution
of point measures on the double coset space
$$U(p,q)//(U(p)\times SU(q))\simeq C_q\times\mathbb T$$ is given by
\begin{align}\label{twodim-torus-convolution}
&(\delta_{(s,z_1)}*_p \delta_{(t,z_2)})(f):=
\\ &=\frac{1}{\kappa_p}\int_{B_q}\int_{SU(q)} 
f\Bigr(  d(t,s;v,w)  , 
z_1z_2\cdot h(t,s;v,w)
\Bigl)\cdot
\Delta(I_q-w^*w)^{p-2q}
\> dv\> dw\notag
\end{align}
for $s,t\in C_q$, $z_1,z_2\in\mathbb T$ and all bounded continuous functions
$f\in C_b(C_q\times\mathbb T)$ with the abbreviations
\begin{equation}\label{def-d-term}
d(t,s;v,w):=(d_j(t,s;v,w))_{j=1,\ldots,q}:=
 \arcosh(\sigma_{sing}(\sinh\underline t \,w\,\sinh\underline s \,+\,
\>\cosh \underline t\> v\> \cosh \underline s))\in C_q
\end{equation}
and
\begin{equation}\label{def-h-term}
h(t,s;v,w):=
 \Delta(\sinh\underline t \,w\,\sinh\underline s \,+\,
\>\cosh \underline t\> v\> \cosh \underline s).
\end{equation}
It is well known (see e.g.~\cite{J}) that this  double coset convolution can be uniquely
extended in a bilinear, weakly continuous convolution on the Banach space
 $M_b(C_q\times\mathbb T)$ of all bounded regular Borel measures on
$C_q\times\mathbb T$, and that $(C_q\times\mathbb T, *_p)$ forms a commutative hypergroup. 
Moreover, by Theorem \ref{classification-spher-allg}, the functions $\phi_{\lambda,l}^p$
($\lambda\in\mathbb C^q, l\in \mathbb Z$) form the set of all multiplicative functions
on these hypergroups.

\begin{remark}\label{haar-torus}
For integers $p\ge 2q$, the image $\omega_p\in M^+(C_q\times\mathbb T)$
of the Haar measure on $U(p,q)$ under the canonical projection
$$U(p,q)\to U(p,q)//(U(p)\times SU(q))\simeq C_q\times\mathbb T$$
is given, as a measure on   $C_q\times\mathbb T$, by
\begin{equation}\label{haar-measure-torus}
d\omega_p(t,z)=const\cdot
\prod_{j=1}^q \sinh^{2p-2q+1}t_j  \cosh t_j\cdot \prod_{1\le i< j\le q}
\bigl| \cosh(2t_i)-\cosh(2t_j)\bigr|^2\> dt\> dz
\end{equation}
with the Lebesgue measure $dt$ on $C_q$ and the uniform distribution $dz$ on
$\mathbb T$. This measure is, by its construction (see \cite{J}),
 the Haar measure of the hypergroup 
 $(C_q\times\mathbb T, *_p)$.

Eq.~(\ref{haar-measure-torus}) can be derived analogous to the well-known case
 $U(p,q)//(U(p)\times U(q))\simeq C_q$
and is likely to be known; see Section 5 of Heckman \cite{HS}. We shall derive
a more general formula for Haar measures on associated hypergroups on
 $C_q\times\mathbb R$ in Section 4 by using the known Haar measures on the
associated hypergroups on
 $C_q$ due to R\"osler \cite{R2}; this formula
 contains (\ref{haar-measure-torus}). We thus skip a proof here.
\end{remark}

\section{Product formulas on $C_q\times \mathbb R$}

In this section, we extend the product formula (\ref{group-torus-convo}) on 
$C_q\times\mathbb T$
for the spherical functions $\phi_{\lambda,l}^p(t,z)$ of Theorem
\ref{classification-spher-allg} in several ways.

For a fixed dimension parameter $p\ge 2q$,  we first write
 it as a  product formula for functions on the universal covering
 $C_q\times\mathbb R$ of $C_q\times\mathbb T$. For this we define  the functions
\begin{equation}\label{def-psi} 
\psi_{\lambda,l}(t,\theta):=\psi_{\lambda,l}^p(t,\theta):= e^{i\theta l}
\cdot\prod_{j=1}^q \cosh^l t_j\cdot
F_{BC_q}(i\lambda,k(p,q,l);t) \quad\quad(t\in C_q, \> \theta\in\mathbb R)
\end{equation}
with $\lambda\in\mathbb C^q$, $l\in\mathbb Z$, and  the multiplicity
$k(p,q,l)=(p-q-l,\> \frac12 +l,\> 1)$ as above. 
These functions are related to the spherical functions
 $\phi_{\lambda,l}^p$ of the preceding section by
\begin{equation}\label{vgl-psi-phi}
\phi_{\lambda,l}^p(t,e^{i\theta})= \psi_{\lambda,l}(t,\theta) 
 \quad\quad\quad(t\in C_q, \> \theta\in\mathbb R, \> l\in\mathbb Z).
\end{equation}
In a second step we notice that both sides of this product
 formula depend analytically on the parameters
$l$ and $p$ and
 extend the formula to a positive product formula for  all  $l\in\mathbb C$ and all 
 $p\in\mathbb R$ with $p>2q-1$ by some principle of analytic continuation.
For this step we shall employ
 Carlson's theorem on analytic continuation which we 
recapitulate  from \cite{Ti}, p.186,
for the convenience of the reader:

\begin{theorem}\label{continuation} Let $f(z)$ be holomorphic in a neighbourhood of
$\{z\in \mathbb C:{\rm Re\>} z \geq 0\}$ satisfying $f(z) = O\bigl(e^{c|z|}\bigr)$
on $\,{\rm Re\>}  z \geq 0$ for some $c<\pi$. 
If $f(z)=0$ for all nonnegative integers $z$, then $f$ is identically zero for
${\rm Re\>}  z>0$.
\end{theorem}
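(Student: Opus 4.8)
The plan is to reduce everything to a Phragm\'en--Lindel\"of argument after dividing out the prescribed zeros. Since $\sin(\pi z)$ has simple zeros exactly at the integers and $f$ vanishes at all nonnegative integers, I would set
$$g(z):=\frac{f(z)}{\sin(\pi z)}$$
and note that the apparent singularities of $g$ at $z=0,1,2,\dots$ are removable, so $g$ is holomorphic in a neighbourhood of $\{\operatorname{Re} z\ge 0\}$. The decisive feature is that $\sin(\pi z)$ is exactly matched to the hypotheses: its zero set on $[0,\infty[$ coincides with the prescribed zero set of $f$, and in the imaginary direction it grows precisely like $e^{\pi|y|}$, i.e.\ at the critical rate against which the assumption $c<\pi$ is calibrated.

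Next I would record the two growth estimates that carry the argument. Writing $z=x+iy$ and using $|\sin(\pi z)|^2=\sin^2(\pi x)+\sinh^2(\pi y)$, on the imaginary axis one has $|\sin(\pi(iy))|=\sinh(\pi|y|)$, whence
$$|g(iy)|\le C\,e^{(c-\pi)|y|},$$
which tends to $0$ as $|y|\to\infty$ because $c<\pi$; thus $g$ decays exponentially on the boundary. Off the real axis the same identity bounds $|\sin(\pi z)|$ from below, and near the positive integers, where $g$ is merely known to be holomorphic, I would control it by the maximum principle on small disks centred at the integers. Together these give a global bound $|g(z)|\le C'e^{c|z|}$ on $\{\operatorname{Re} z\ge0\}$, so that $g$ is of exponential type $\le c<\pi$ in the right half-plane while decaying exponentially along its boundary.

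The final step is a Phragm\'en--Lindel\"of argument in the half-plane, and this is where the real difficulty lies: one cannot simply invoke ``bounded on the boundary implies bounded inside'', since $e^z$ shows that exponential-type functions bounded on the imaginary axis may be unbounded in the half-plane. The strict inequality $c<\pi$ must be used. I would pass to the indicator $h_g(\theta)=\limsup_{r\to\infty}r^{-1}\log|g(re^{i\theta})|$; from the estimates above $h_g(\theta)\le c-\pi|\sin\theta|$, so $h_g(\pm\pi/2)\le c-\pi<0$. Because indicators of exponential-type functions are trigonometrically convex, applying this convexity on the subintervals $[-\pi/2+\eta,\,\pi/2-\eta]$ and letting $\eta\to0$ forces $h_g\equiv-\infty$ on $\,]-\pi/2,\pi/2[\,$; in particular $g$ is bounded in the right half-plane. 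To conclude, I would use that a nonzero bounded holomorphic function on the half-plane lies in the Nevanlinna class and hence has Poisson-integrable boundary logarithm, $\int_{\mathbb R}\frac{\log|g(iy)|}{1+y^2}\,dy>-\infty$; but the boundary bound $\log|g(iy)|\le (c-\pi)|y|+\mathrm{const}$ with $c-\pi<0$ makes this integral diverge to $-\infty$, a contradiction unless $g\equiv0$, and then $f\equiv0$. I expect the main obstacle to be exactly this borderline bookkeeping at the critical exponent $\pi$: handling $g$ across the zeros of $\sin(\pi z)$ on the positive real axis, where only holomorphy rather than an a priori bound is available, and forcing the half-plane Phragm\'en--Lindel\"of/indicator argument through at exponential order one.
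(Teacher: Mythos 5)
The paper does not actually prove this statement: it is Carlson's theorem, reproduced verbatim from Titchmarsh \cite{Ti}, p.~186, purely as a tool for the analytic continuation in Section~3, so there is no in-paper argument to compare against. Measured against the classical proof, your proposal is correct and follows the same basic route (divide by $\sin(\pi z)$, then run Phragm\'en--Lindel\"of in the right half-plane), and the steps you single out as delicate are exactly the right ones. Two remarks on the details. First, the $\eta\to 0$ trigonometric-convexity step does work, but only because you feed in the \emph{improved} endpoint bounds $h_g\bigl(\pm(\pi/2-\eta)\bigr)\le c-\pi\cos\eta$ coming from the direct estimate $|\sin(\pi z)|\ge \mathrm{const}\cdot e^{\pi|y|}$ for $|y|\ge 1$ (with only $h_g\le c$ at the endpoints the interpolated bound would blow up to $+\infty$ rather than $-\infty$); with these, the interpolant at an interior angle $\theta$ is $(c-\pi\cos\eta)\cos\theta/\sin\eta\to-\infty$. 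Moreover, what actually gives you boundedness of $g$ is not the pointwise statement $h_g\equiv-\infty$ (a radial $\limsup$ along each ray), but the uniformity of the Phragm\'en--Lindel\"of estimate on each closed subsector $|\arg z|\le \pi/2-\eta$ together with the direct bound on the two remaining thin sectors; this is available, but should be said. Second, your Nevanlinna-class finish is not overkill --- it is genuinely needed. Unlike the entire case, a nonzero function of exponential type in a half-plane can have indicator identically $-\infty$ on the open interval of directions (e.g.\ $e^{-z\log z}$ on $\mathrm{Re}\,z\ge0$), so one cannot stop at $h_g\equiv-\infty$; it is the combination of boundedness with the exponential decay $|g(iy)|\le Ce^{(c-\pi)|y|}$ on the boundary, via the divergence of $\int\log|g(iy)|\,(1+y^2)^{-1}dy$, that forces $g\equiv0$ and hence $f\equiv0$. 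Modulo these (correctly identified and recoverable) points of bookkeeping, the proof is complete.
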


To state our product formula on  $C_q\times\mathbb R$, we 
  need the fact from complex analysis  that an analytic function
$f:G\to\mathbb C$ on a connected, simply connected set
 $G\subset \mathbb C^n $ with $0\not\in f(G)$ 
admits an analytic logarithm $g:G\to\mathbb C$
with $f=e^g$. In fact, this known result can be shown like in the well-known
 one-dimensional case by using the fact that a closed 1-form is exact on a  
 simply connected domain.
 We also recapitulate that $SU(q)$ is simply connected.
These results and the results of Section 2 lead to the following
 extended product formula:

\begin{theorem}\label{general-twodim-prod-form}
 Let $q\ge1$ be an integer. For all $l\in\mathbb C$ and   $p\in]2q-1,\infty[$,
the functions $\psi_{\lambda,l}^p$ of Eq.~(\ref{def-psi}) 
satisfy the product formula
\begin{align}\label{gen-twodim-prod-form}
&\psi_{\lambda,l}^p(t,\theta_1)\psi_{\lambda,l}^p(s,\theta_2)=\quad\quad \\
&\quad=\frac{1}{\kappa_p}\int_{B_q}\int_{SU(q)} 
\psi_{\lambda,l}^p\Bigr( d(s,t;v,w), 
\theta_1+\theta_2+\rm{Im}\> \ln  h(s,t;v,w)\Bigl)\cdot
\Delta(I_q-w^*w)^{p-2q}
\> dv\> dw.\notag
\end{align}
for all  $s,t\in C_q$, $\theta_1,\theta_2\in\mathbb R$  where
$\kappa_{p} $, $dw$, the functions $d,h$, and other data are defined as in Section 2,
and where $\ln$ denotes the  unique analytic branch of the logarithm of the function
$$(s,t,w,v)\mapsto h(s,t;v,w)=
\Delta(\sinh\underline t \,w\,\sinh\underline s \,+\,
\>\cosh \underline t\> v\> \cosh \underline s)$$
on the simply connected domain $C_q\times C_q\times B_q\times SU(q)$ with 
$\ln\Delta(I_q)=0$.
\end{theorem}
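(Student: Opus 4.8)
The plan is to strip the identity down to a single $\theta$-free statement about Heckman--Opdam functions that is manifestly holomorphic in both $l$ and $p$, to anchor it at the integer values already settled by Proposition~\ref{prop-group-torus-convo} together with Theorem~\ref{classification-spher-allg}, and then to fill in all complex $l$ and all real $p>2q-1$ by applying Carlson's Theorem~\ref{continuation} successively in $l$ and in $p$.

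First I would perform the structural reduction. Since $e^{i(\theta_1+\theta_2)l}$ is a common nonvanishing entire factor, dividing it out reduces (\ref{gen-twodim-prod-form}) to an identity free of $\theta_1,\theta_2$. Writing $g_{\lambda,l}^p(t):=\prod_{j=1}^q\cosh^l t_j\cdot F_{BC_q}(i\lambda,k(p,q,l);t)$, two observations make the right-hand side transparent. (i) The matrix $M$ underlying $d$ and $h$ (so that $h=\Delta(M)$ and $\cosh d_j=\sigma_{sing}(M)_j$, cf.\ (\ref{def-d-term})--(\ref{def-h-term})) is invertible for every $w\in B_q$: writing $w=\sigma_0^*u\sigma_0$ as the lower right $q\times q$ block of some $u\in U(p_0)$ with an integer $p_0\ge2q$ (these blocks exhaust $B_q$), $M$ is the lower right block $D$ of an element of $U(p_0,q)$, and the relation $D^*D-B^*B=I_q$ forces $D^*D\ge I_q$; as this concerns $M$ alone it holds over the whole integration range, independently of the multiplicity $p$. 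Hence $h=\Delta(M)\ne0$, and since $C_q\times C_q\times B_q\times SU(q)$ is simply connected the analytic branch of $\ln h$ with $\ln\Delta(I_q)=0$ exists, as recalled before the theorem. (ii) Because $\cosh d_j=\sigma_{sing}(M)_j\ge1$ one gets $\prod_{j=1}^q\cosh d_j=|\Delta(M)|=e^{\mathrm{Re}\,\ln h}$, so the two $\psi$-factors combine as $\prod_{j=1}^q\cosh^l d_j\cdot e^{il\,\mathrm{Im}\,\ln h}=e^{l\,\ln h}=h^l$. Thus (\ref{gen-twodim-prod-form}) is equivalent to
\[
g_{\lambda,l}^p(t)\,g_{\lambda,l}^p(s)=\frac1{\kappa_p}\int_{B_q}\int_{SU(q)}h^l\,F_{BC_q}\bigl(i\lambda,k(p,q,l);d(s,t;v,w)\bigr)\,\Delta(I_q-w^*w)^{p-2q}\,dv\,dw .
\]
Both sides are now holomorphic in $l$ and in $p$. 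The essential point is that $k(p,q,l)=(p-q-l,\tfrac12+l,1)$ has $k_1+k_2=p-q+\tfrac12$ \emph{independent of} $l$ and $k_3=1$, so by (\ref{Kreg}) one has $k(p,q,l)\in K^{reg}$ for \emph{every} $l\in\mathbb C$ as soon as $\mathrm{Re}\,p>2q-1$; hence $F_{BC_q}(i\lambda,k(p,q,l);\cdot)$ is entire in $l$ and holomorphic in $p$ on the required range.

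For the base case, when $l\in\mathbb Z$ and $p$ is an integer $\ge2q$ the reduced identity is exactly Proposition~\ref{prop-group-torus-convo} rewritten through (\ref{vgl-psi-phi}); the choice of branch is irrelevant there since $e^{il\,\mathrm{Im}\,\ln h}=\arg(h)^l$ for $l\in\mathbb Z$. Knowing the identity on $\mathbb Z\times(\mathbb Z\cap[2q,\infty))$, I would fix an integer $p\ge2q$ and apply Carlson's theorem to the difference $E_p(l)$ of the two sides: $E_p$ is entire in $l$, vanishes at every nonnegative integer, and---granting the growth bound discussed below---satisfies $E_p(l)=O(e^{c|l|})$ with $c<\pi$; Carlson yields $E_p\equiv0$ on $\{\mathrm{Re}\,l>0\}$, whence $E_p\equiv0$ on $\mathbb C$ by the identity theorem. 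Fixing then $l\in\mathbb C$ and repeating the argument in the shifted variable $p-2q$ (the integrand and $\kappa_p^{-1}$ being holomorphic for $\mathrm{Re}\,p>2q-1$, with zeros at the integers $p\ge2q$) extends the identity to all $p$ with $\mathrm{Re}\,p>2q$, and hence, by the identity theorem, to all $p$ with $\mathrm{Re}\,p>2q-1$; in particular to all real $p\in\,]2q-1,\infty[$.

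The main obstacle is precisely the exponential-type bound $c<\pi$ required by Carlson. The mechanism controlling it is built into the definition of $\psi$: the factor $\prod_j\cosh^l t_j$ is designed to cancel the leading parameter-growth of $F_{BC_q}(i\lambda,k(p,q,l);t)$, whose exponential rate in $l$ is governed by $\rho(k(p,q,l))$ and grows like $l\sum_j t_j$; consequently $g_{\lambda,l}^p(t)$ should have exponential type in $l$ bounded uniformly in $t$ (a crude count gives rate $\approx q\ln2$), while on the right the modulus of $h^l$ equals $|h|^{\mathrm{Re}\,l}=(\prod_j\cosh d_j)^{\mathrm{Re}\,l}$ times $e^{-\mathrm{Im}(l)\,\mathrm{Im}\,\ln h}$, with $\mathrm{Im}\,\ln h$ bounded on the compact set $B_q\times SU(q)$ for fixed $s,t$. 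Converting this heuristic into a genuine uniform bound with constant $<\pi$ for all $q$ is the technical heart and needs honest estimates on $F_{BC_q}$ as a function of the multiplicity. A robust alternative, valid for every $q$, is to apply Carlson only for $(s,t)$ in a small neighbourhood of the origin, where the type tends to $0$ and is therefore $<\pi$, and then to propagate the identity to all $(s,t)\in C_q\times C_q$ by real-analyticity of both sides in $(s,t)$---using the Weyl-invariance of $F_{BC_q}$ to absorb the non-smoothness of $d=\arcosh(\sigma_{sing}(M))$ at the chamber walls. The same type control in the variable $p$ underlies the second Carlson step.
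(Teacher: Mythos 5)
Your structural reduction is sound and consistent with the paper: dividing out $e^{i(\theta_1+\theta_2)l}$ and using $\prod_j\cosh d_j=|h|$ to combine the two factors into $h^l$ is exactly the mechanism the paper exploits later (compare Section 5), and the overall strategy --- anchor at integer $(p,l)$ via Proposition~\ref{prop-group-torus-convo} and continue by Carlson's theorem in $l$ and then in $p$ --- is also the paper's. The genuine gap is the growth estimate that you yourself flag as ``the technical heart''. There is no available bound of exponential type for $F_{BC_q}(i\lambda,k(p,q,l);t)$ as a function of a \emph{complex} multiplicity parameter $l$ (nor of $p$): the known estimates (Opdam's Proposition 6.1 in \cite{O1}, \cite{Sch}) require real nonnegative multiplicities, and for $l$ off the real axis the multiplicity $k(p,q,l)=(p-q-l,\tfrac12+l,1)$ is genuinely complex. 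Your proposed repair --- running Carlson only for $(s,t)$ near the origin, where ``the type tends to $0$'' --- does not address this: even for small fixed $t\neq 0$ the left-hand side of your reduced identity still contains $F_{BC_q}(i\lambda,k(p,q,l);t)$ as a function of $l$, and no cited result controls its growth as $l\to\infty$ in a half-plane. Your heuristic via $\rho(k(p,q,l))$ concerns the $t\to\infty$ asymptotics for a fixed admissible multiplicity, not the dependence on $l$ itself.

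The paper's way around this --- the idea missing from your proposal --- is to decouple the continuation in the multiplicities from the continuation in the spectral variable. It first restricts to the discrete set of spectral parameters $\lambda+\rho(k)$ with $\lambda\in P_+$, where by (\ref{fbc-jacobi}) one has $F_{BC_q}=c(\lambda+\rho(k),k)\,P_\lambda(k;\cdot)$ with Heckman--Opdam Jacobi polynomials whose coefficients are \emph{rational} in $k$; after normalising by $e^{q(t_1+s_1)l}$ (controlled via (\ref{submult})) and by the $c$-function, whose inverse has only polynomial growth in $p,l$ by Stirling, both sides of the resulting identity (\ref{prodpoly}) are of polynomial growth, so Carlson applies cleanly first in $l$ and then in $p$. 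Only afterwards, for fixed real $p>2q-1$ and real $l\in[-\tfrac12,\,p-q]$ --- precisely where the multiplicity is nonnegative and Opdam's exponential estimate is valid --- does the paper perform a $q$-fold Carlson continuation in $\lambda$ to pass from the discrete set to all of $\mathbb C^q$, finishing with ordinary analytic continuation in $l$. Without this detour through the Jacobi polynomials and the subsequent separate continuation in the spectral parameter, your Carlson steps in $l$ and $p$ cannot be justified.
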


\begin{proof} In a first step take a parameter $l\in\mathbb Z$ and an integer $p\ge 2q$.
In this case, (\ref{gen-twodim-prod-form}) follows immediately from (\ref{vgl-psi-phi}) 
 and the product formula (\ref{group-torus-convo}).

We next observe that both sides of  (\ref{gen-twodim-prod-form}) 
 are analytic in the variables
$p,l,\lambda$.
We now want to employ Carleson's theorem to extend (\ref{gen-twodim-prod-form}) 
to $p\in]2q-1,\infty[$ and $l\in\mathbb C$. However, for this we need
 some exponential growth
estimates for the hypergeometric functions $F_{BC_q}(i\lambda, k(p,q,l); t)$ with respect to
 the parameters $p$ and $l$ in some suitable right half planes, and
 such suitable exponential estimates
are available only for real, nonnegative multiplicities; see Proposition 6.1 of
 \cite{O1}, \cite{Sch}, and Section 3 of \cite{RKV}. We thus  proceed in
 several steps,
 follow the proof of Theorem 4.1 of \cite{R2}, and restrict our attention 
first to a discrete set of spectral parameters $\lambda$ for which  $F_{BC_q}$ is a product
 of the $c$-function and Jacobi polynomials
such that in this case the growth condition can be checked. Carleson's theorem then leads
 to (\ref{gen-twodim-prod-form})  for this discrete set
 of spectral parameters $\lambda$  and all
$p\in]2q-1,\infty[$ and $l\in\mathbb C$.
 In a further step we fix $p\in]2q-1,\infty[$ and $l\in [-1/2, p-q]$ and extend 
 (\ref{gen-twodim-prod-form}) by Carleson's theorem
 to all  spectral parameters $\lambda\in \mathbb C^q$. Finally, 
usual analytic continuation leads to the general result in the theorem for $l\in\mathbb C$.

Let us go into details. We need some notations and facts from \cite{O1}, \cite{O2}, and \cite{HS}.
For our root system $R:=2\cdot BC_q$ with the set $R_+$ of positive roots as in Section 2,
 we define the half sum
of roots
\begin{equation}\label{def-rho}
\rho(k):=\frac{1}{2}\sum_{\alpha\in R_+} k(\alpha)\alpha= 
(k_1 + 2k_2)\sum_{j=1}^q e_j + 2k_3\sum_{j=1}^q(q-j)e_j
\end{equation}
as well as the $c$-function
\begin{equation}\label{c_function} c(\lambda,k) := \prod_{\alpha\in R_+}
\frac{\Gamma(\langle\lambda,\alpha^\vee\rangle +
\frac{1}{2}k(\frac{\alpha}{2}))}{\Gamma (\langle\lambda,\alpha^\vee\rangle +
\frac{1}{2}k(\frac{\alpha}{2}) + k(\alpha))}\cdot \prod_{\alpha\in R_+} \frac{\Gamma
(\langle\rho(k),\alpha^\vee\rangle + \frac{1}{2}k(\frac{\alpha}{2}) +
k(\alpha))}{\Gamma(\langle\rho(k),\alpha^\vee\rangle +
\frac{1}{2}k(\frac{\alpha}{2}))}\end{equation}
with the usual inner product on $\mathbb C^q$ and 
the conventions $\alpha^\vee:=2\alpha/\langle\alpha,\alpha\rangle$
and
 $k(\frac{\alpha}{2}) = 0$ for $\frac{\alpha}{2}\notin R$. Notice that $c$ is meromorphic on 
$\mathbb C^q\times\mathbb C^3$. We now consider the dual root system
  $R^\vee = \{\alpha^\vee: \alpha\in R\}$,
the  coroot lattice  $Q^\vee = \mathbb Z.R^\vee$, and the weight lattice
$P=\{ \lambda\in \mathbb R^q: \langle\lambda,\alpha^\vee\rangle \in \mathbb Z
\,\,\forall\,\alpha\in R\}\,$  of  $R$. Further, denote by
$\, P_+ =\{ \lambda\in P: \langle\lambda,\alpha^\vee\rangle \geq 0
\,\,\forall\,\alpha\in R_+\}\,$  the set of dominant weights associated with $R_+$.
Then, by Eq.~(4.4.10) of \cite{HS} and by \cite{O1}, we obtain for all
 $k\in K^{reg}$ and all $\lambda\in P_+$,
\begin{equation}\label{fbc-jacobi}
 F_{BC_q}(\lambda +\rho(k), k;t) = \,c(\lambda+\rho(k),k)
  P_\lambda(k;t)
\end{equation}
where $c(\lambda,k)$ is the $c$-function \eqref{c_function} which is meromorphic on
$\mathbb C^q\times K$, and where the 
 $P_\lambda$ are the Heckman-Opdam Jacobi polynomials of type $BC_q$.
We now consider the parameters
$$k_{p,l}:=(p-q-l, 1/2+l, 1)\in K^{reg} $$
(see (\ref{Kreg})) as well as the associated half sum of roots
\begin{equation}\label{def-rho-spezial}
\rho(k_{p,l})=
(p-q +l+1)\sum_{j=1}^q e_j\,+ 2 \sum_{j=1}^q (q-j)e_j.
\end{equation}
Using the asymptotics of the gamma function, we now check the growth of
$\, c(\lambda+\rho(k_{p,l}),k_{p,l})\,$
 for fixed $\lambda\in P_+$ and parameters $p,l\to\infty$
 in suitable half planes.
Indeed, by Stirling's formula,
$$\Gamma(z+a)/\Gamma(z)\sim z^a 
\quad\quad\text{for}\quad\quad z\to\infty,\quad {\rm Re}\> z\ge0 . $$
Moreover, for $\rho=\rho(k_{p,l})$,
\begin{align*}
 &c(\lambda + \rho,k) \,=\\
&=\prod_{i=1}^q \frac{\Gamma(\lambda_i + \rho_i)\,\Gamma(\rho_i + k_1)}
{\Gamma(\lambda_i + \rho_i +k_1)\,\Gamma(\rho_i)} \,\cdot 
\prod_{i=1}^q \frac{\Gamma\bigl(\frac{\lambda_i+\rho_i}{2} + \frac{1}{2}k_1\bigr)\,
\Gamma(\frac{\rho_i}{2} + \frac{1}{2} k_1 +
k_2)}{\Gamma\bigl(\frac{\lambda_i+\rho_i}{2} +
\frac{1}{2}k_1+k_2\bigr)\,\Gamma(\frac{\rho_i}{2} + \frac{1}{2} k_1)}\\
&\cdot\prod_{i<j} \frac{\Gamma\bigl(\frac{\lambda_i + \rho_i - \lambda_j
-\rho_j}{2}\bigr)\,
\Gamma\bigl(\frac{\rho_i - \rho_j}{2} + 1\bigr)}
{\Gamma\bigl(\frac{\lambda_i + \rho_i - \lambda_j - \rho_j}{2} + 1\bigr)\,
\Gamma\bigl(\frac{\rho_i-\rho_j}{2}\bigr)}\,\cdot
\,\prod_{i<j}\frac{\Gamma\bigl(\frac{\lambda_i + \rho_i + \lambda_j +\rho_j}{2}\bigr)\,
\Gamma\bigl(\frac{\rho_i + \rho_j}{2} + 1\bigr)}
{\Gamma\bigl(\frac{\lambda_i + \rho_i +\lambda_j + \rho_j}{2} + 1\bigr) \,
\Gamma\bigl(\frac{\rho_i+\rho_j}{2}\bigr)}.
\end{align*}
For $p,l\to\infty$ we obtain that  the first product is
asymptotically equal to $\prod_{i=1}^q 
\bigl(\frac{p+l}{2p}\bigr)^{\lambda_i}$, while the second 
is
asymptotically equal to $\prod_{i=1}^q 
\bigl(\frac{p}{p+l}\bigr)^{\lambda_i/2}$.  
Furthermore,
the third product is independent of $p,l$, and the last product is asymptotically
equal to $1$. In summary, for fixed $\lambda$, the function $c(\lambda + \rho,k)$ behaves like
 $\bigl(\frac{p+l}{p}\bigr)^{a}$ for some $a$, i.e.,
  $c(\lambda + \rho,k)^{-1}$ has polynomial growth.

We now  observe  for $s,t\in C_q$, $w\in B_q$, $v\in SU(q)$, and $
d(t,s;v,w)\in C_q$
that
\begin{equation}\label{submult}
\| d(t,s;v,w)\|_\infty \le \|s\|_\infty+ \|t\|_\infty=s_1+t_1.
\end{equation}
In fact, this follows easily from the submultiplicativity of the spectral norm;
c.f. the proof of Theorem 5.2(1) of \cite{R2}. 
We now use Eq.~(\ref{gen-twodim-prod-form}) for integers $p\ge 2q$ and
$l\in\mathbb Z$, and observe that  for integers $p\ge 2q$ and
$l\in\mathbb Z$ and all $s,t\in C_q$, $\lambda\in P_+$,
 \begin{align}\label{prodpoly}
&\frac{\Bigl(\prod_{j=1}^q \cosh t_j \cdot \cosh s_j\Bigr)^l}{e^{q(t_1+s_1)l}}
P_\lambda(k_{p,l};t)P_\lambda(k_{p,l};s)=\notag \\
\, \,\,& =\frac{1}{\kappa_{p}\cdot c(\lambda+\rho(k_{p,l}),k_{p,l})}
 \int_{B_q}\int_{SU(q)} 
\frac{\Bigl(\prod_{j=1}^q \cosh d(t,s;v,w)\Bigr)^l}{e^{q(t_1+s_1)l}}\cdot
 P_\lambda(k_{p,l}; d(t,s;v,w))\cdot \notag \\
&\quad\quad\quad\quad\quad\quad
 \cdot arg(h(t,s;v,w))
\cdot \Delta(I-w^*w)^{p-2q} dv dw.\end{align}

The Jacobi polynomials $P_\lambda(k;.)$ have rational coefficients in $k$ with respect
 to the monomial  basis $e^\nu,\,\nu\in P$; see Section 11 of \cite{M} or
 the explicit determinantal construction in Theorem 5.4 of \cite{DLM}.
Carlson's theorem now yields that formula \eqref{prodpoly} holds for all $l\in
\mathbb C$.
Moreover, as derived in the proof of Theorem 3.6 of \cite{R1}, the normalized integral
\[ \frac{1}{|\kappa_{p}|} \int_{B_q} \vert\Delta(I-w^*w)^{p-2q}\vert dw\]
converges exactly if $\,\text{Re}\, p >2q -1$ and 
is  of polynomial growth for $p\to\infty$ in the right halfplane
$\{p\in\mathbb C:\> {\rm Re}\> p\ge 2q\}$.
Thus for fixed $t,s,$ both sides of \eqref{prodpoly} are holomorphic  and of 
polynomial growth for $p\to\infty$  in this halfplane.
 Moreover, they coincide for all  integers $ p\geq 2q$. Another
 application of Carlson's theorem yields that \eqref{prodpoly} also 
holds for all $p$ in this halfplane. 
This proves the stated result for all spectral parameters
 $\lambda + \rho(k)$ with $\lambda\in P_+$. 

\smallskip 
In the final step we
 extend the product formula with respect to the spectral parameter. For this
 we fix
 $s,t\in C_q$, $p\ge 2q$ a real number, and $l\in[-1/2,q-p]$. Then
 $k=k_{p,l}$ is nonnegative, and we have
 the estimate 
\[ |F_{BC_q}(\lambda,k;t)|\,\leq \,|W|^{1/2} e^{{max}_{w\in W}\text{Re}\langle
  w\lambda,t\rangle} \]
by  Proposition 6.1 of  \cite{O1}. We now can proceed precisely as in the last
step of the proof of Theorem 4.1 on pp. 2791f of \cite{R2}.  As in
Eq.~(\ref{prodpoly}), we can rewrite (\ref{gen-twodim-prod-form}) with 
some suitable exponential growth correction which ensures that  by a $q$-fold application of
Carlson's theorem, (\ref{gen-twodim-prod-form}) 
can be extended to all $\lambda\in \mathbb C^q$. We skip the details.

 In a final step, usual
analytic continuation yields the theorem for all  $l\in \mathbb C^q $, which
completes the proof.
\end{proof}

It should be noticed that the precise choice of the complex logarithm in
(\ref{gen-twodim-prod-form})  is not essential for this product formula, as
it has no influence on the analyticity of the formulas  above
 with respect to  $p,l,\lambda$. However, our choice of 
the complex logarithm in (\ref{gen-twodim-prod-form}) will be essential in
the next section when we prove that 
(\ref{gen-twodim-prod-form}) induces an associative convolution algebra on the
Banach space of all bounded signed probability measures on $C_q\times\mathbb
R$.

\begin{remark} For $p=2q-1$, a degenerate version of the product formula 
(\ref{gen-twodim-prod-form}) is available. For this we need some notations and
  facts. We here follow Section 3 of \cite{R1} and Remark 2.14 of \cite{V2}
  where also such limit cases of product formulas for Bessel and Laguerre functions on
  matrix cones were considered.

We fix the dimension $q$ and consider the matrix ball 
$B_q:=\{w\in M^{q,q}(\mathbb C):\> w^*w\le I_q\}$ as above as well as the ball 
$B:=\{y\in\mathbb C^q:\> \|y\|_2<1\}$ and the sphere 
$S:=\{y\in\mathbb C^q:\> \|y\|_2=1\}$.
By Lemma 3.6 and Corollary 3.7 of \cite{R1}, the mapping $P:B^q\to B_q$ from
the direct product $B^q$ to  $B_q$
with
\begin{equation}\label{trafo-P}
 P(y_1, \ldots, y_q):= \begin{pmatrix}y_1\\y_2(I_q-y_1^*y_1)^{1/2}\\ 
\vdots\\
  y_q(I_q-y_{q-1}^*y_{q-1})^{1/2}\cdots (I_q-y_{1}^*y_{1})^{1/2}\end{pmatrix}
\end{equation}
establishes a diffeomorphism such that the image of the measure 
$$\Delta(I_q-w^*w)^{p-2q}dw$$
under  $P^{-1}$ is given by $\,\prod_{j=1}^{q}(1-\|y_j\|_2^2)^{p-q-j}dy_1\ldots
dy_q$. Therefore, for $p>2q-1$, the product formula 
(\ref{gen-twodim-prod-form}) may be rewritten as
\begin{align}\label{convo-mod}
\psi_{\lambda,l}^p(t,\theta_1)\psi_{\lambda,l}^p(s,\theta_2)
=\frac{1}{\kappa_p}\int_{B^q}\int_{SU(q)} &
\psi_{\lambda,l}^p\Bigr(  d(t,s;v,P(y))   ,
\theta_1+\theta_2+ {\rm{Im}}\> \ln h(t,s;v,P(y))\Bigl)\cdot \notag  \\
&
\cdot\prod_{j=1}^{q}(1-\|y_j\|_2^2)^{p-q-j}dy_1\ldots
dy_q\> dw
\end{align}
with $y=(y_1,\ldots,y_q)\in B^q$, 
where $dy_1,\ldots,dy_q$ means integration with respect to the Lebesgue
measure on $\mathbb C^q$.
 Moreover,  for $p\downarrow 2q-1$, we obtain from (\ref{convo-mod}) 
by continuity the following degenerated
 product formula:
\begin{align}\label{convo-degen}
\psi_{\lambda,l}^{2q-1}(t,\theta_1)\psi_{\lambda,l}^{2q-1}(s,\theta_2)=
\frac{1}{\kappa_{2q-1}}\int_{B^q}\int_{SU(q)} &
\psi_{\lambda,l}^{2q-1}\Bigr(  d(t,s;v,P(y))  , 
\theta_1+\theta_2+  {\rm{Im}}\> \ln h(t,s;v,P(y))\Bigl)\cdot \notag  \\
&
\cdot\prod_{j=1}^{q-1}(1-\|y_j\|_2^2)^{q-1-j}dy_1\ldots
dy_{q-1}\> d\sigma(y_q)\> dw
\end{align}
where $\sigma\in M^1(S)$ is the uniform distribution on $S$ and
$$\kappa_{2q-1}:=\int_{B^q}\prod_{j=1}^{q-1}(1-\|y_j\|_2^2)^{q-1-j}dy_1\ldots
dy_{q-1}\> d\sigma(y_q).$$
\end{remark}

\section{Commutative hypergroups on $C_q\times\mathbb R$ }

The positive product formulas (\ref{gen-twodim-prod-form}) for real parameters 
$p>2q-1$ and (\ref{convo-degen}) for $p=2q-1$ lead to a continuous series of
probability-preserving convolution algebras on $C_q\times\mathbb R$
parametrized by $p\ge 2q-1$. In fact, these convolutions form 
commutative
hypergroups, which have the functions $\psi_{\lambda,l}^{p}$
($\lambda\in\mathbb C^q, l\in\mathbb C$) as multiplicative functions.
For $q=1$, our convolution structures are closely related to those studied by
Trimeche \cite{T}.

Before going into details, we briefly recapitulate some notions from hypergroup theory. 
For more details
we refer to \cite{J} and the monograph \cite{BH}.
Hypergroups generalize the convolution of bounded measures on locally compact groups
such that the convolution  $\delta_x*\delta_y$  of two point measures 
$\delta_x, \delta_y$ is a probability measure with compact support, but not
 necessarily  a point measure.

\begin{definition} A hypergroup is a locally compact Hausdorff space $X$ with a
  weakly continuous, associative, bilinear convolution $*$ on the Banach space $M_b(X)$
  of all bounded regular Borel measures on $X$ such that the following
  properties hold:
\begin{enumerate}
\item[\rm{(1)}] For all $x,y\in X$,  $\delta_x*\delta_y$  is a compactly
  supported probability measure on $X$ such that the support 
  $\text{supp}\>(\delta_x*\delta_y)$ depends continuously on $x,y$ with respect to the
  Michael topology on the space of all compacta in $X$ (see \cite{J} for details).  
\item[\rm{(2)}] There exists a neutral element $e\in X$ with $\delta_x*\delta_e=
\delta_e*\delta_x=\delta_x$ for all $x\in X$. 
\item[\rm{(3)}] There exits a continuous involution $x\mapsto\bar x$ on $X$
  such that for all $x,y\in X$, $e\in \text{supp}\> (\delta_x*\delta_y)$ holds
  if and only if $y=\bar x$. 
\item[\rm{(4)}] If for $\mu\in M_b(X)$, $\mu^-$ is the
  image of $\mu$ under the involution, we require that
  $(\delta_x*\delta_y)^-= \delta_{\bar y}*\delta_{\bar x}$ for all $x,y\in X$.
\end{enumerate}
\end{definition}

Due to weak continuity and bilinearity, the convolution of arbitrary bounded
measures on a  hypergroup is determined uniquely by the convolution of point
measures. 

A  hypergroup is called commutative if so is the convolution.
We recapitulate from \cite{J}  that for a Gelfand pair $(G,K)$, the double coset
 convolution on the double coset space $G//K$ forms a commutative hypergroup.

 For a  commutative hypergroup we define the space
$$\chi(X) = \{\varphi\in C(X): \,\varphi\not\equiv 0, \, \varphi( x* y ):=
(\delta_x*\delta_y)(\varphi) = \varphi(x)\varphi(y) \,\, \forall\,  x,y\in X\} $$
of all nontrivial continuous  multiplicative functions  on $X$, as well as the dual space
$$ \widehat X :=\{\varphi\in \chi(X): \,\varphi \text{ is bounded and }\, 
\varphi(\overline x)= \overline{\varphi(x)}\,\, \forall\,  x\in X\}.$$ 
The elements of $\widehat X$ are called characters. 

Using the  positive product formulas (\ref{gen-twodim-prod-form}) and 
(\ref{convo-degen}) for 
$p>2q-1$ and $p=2q-1$ respectively, 
we now introduce the convolutions of point measures on $X:=C_q\times\mathbb R$ depending on $p$:
For $s,t\in C_q$ and $\theta_1,\theta_2\in\mathbb R$, we define the probability measures 
$(\delta_{(s,\theta_1)}*_p \delta_{(t,\theta_2)})$ with compact supports by
\begin{align}\label{gen-twodim-convolution}
&(\delta_{(s,\theta_1)}*_p \delta_{(t,\theta_2)})(f):=
\\ &=\frac{1}{\kappa_p}\int_{B_q}\int_{SU(q)} 
f\Bigr(  d(t,s;v,w)  , 
\theta_1+\theta_2+{\rm{Im}}\>\ln h(t,s;v,w)
\Bigl)\cdot
\Delta(I_q-w^*w)^{p-2q}
\> dv\> dw\notag
\end{align}
for $p>2q-1$, and by 
\begin{align}\label{convo-degen-2}
&(\delta_{(s,\theta_1)}*_{2q-1} \delta_{(t,\theta_2)})(f):=
\\ &=\frac{1}{\kappa_{2q-1}}\int_{B^q}\int_{SU(q)} 
f\Bigr(  d(t,s;v,P(y))  , 
\theta_1+\theta_2+ {\rm{Im}}\>\ln h(t,s;v,P(y))\Bigl)\cdot 
\notag\\ &\quad\quad\quad\cdot
\prod_{j=1}^{q-1}(1-\|y_j\|_2^2)^{q-1-j}dy_1\ldots
dy_{q-1}\> d\sigma(y_q)\> dw\notag
\end{align}
 for $p=2q-1$ for all $f\in C(C_q\times\mathbb R)$,
 where the functions  $d, h$ and the other data are given as in
 Sections 2 and 3.

\begin{theorem}
Let $q\ge 1$ be an integer and $p\in[2q-1,\infty[$. 
 Then $*_p$ can be extended uniquely to a bilinear,
 weakly continuous convolution on the Banach space $M_b( C_q\times \mathbb R)$. 
This convolution is associative, and $(C_q\times \mathbb R, *_{p})$ is a 
commutative hypergroup with $(0,0)$ as identity and with the involution
 $\overline{(r,a)}:= (r,-a)$. 
\end{theorem}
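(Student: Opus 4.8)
The plan is to verify, directly from the explicit formula \eqref{gen-twodim-convolution} (for $p>2q-1$) and its degenerate analogue \eqref{convo-degen-2} (for $p=2q-1$), each of the axioms (1)--(4) of the definition, together with commutativity and associativity. Positivity and the probability property are immediate: the weight $\Delta(I_q-w^*w)^{p-2q}$ is nonnegative with total mass $\kappa_p$, so inserting $f\equiv 1$ gives total mass $1$. Compact support follows from the estimate \eqref{submult}, $\|d(t,s;v,w)\|_\infty\le s_1+t_1$, controlling the $C_q$-component, together with the boundedness of $\mathrm{Im}\,\ln h(t,s;v,w)$ on the integration domain; since both components are continuous images of a fixed compactum and depend continuously on $(s,t,\theta_1,\theta_2)$, the support map is continuous for the Michael topology. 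The unique bilinear, weakly continuous extension of $*_p$ to $M_b(C_q\times\mathbb R)$ is then standard (cf. \cite{J}). I treat $p>2q-1$ first; the boundary case $p=2q-1$ will follow by letting $p\downarrow 2q-1$, using that the measures in \eqref{gen-twodim-convolution} converge weakly to those in \eqref{convo-degen-2}, so that each algebraic identity passes to the limit.

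Next I would establish the identity and commutativity. For the identity $(0,0)$, putting $t=0$ in \eqref{gen-twodim-convolution} gives $\cosh\underline 0=I_q$, $\sinh\underline 0=0$, so $d(0,s;v,w)=\arcosh\,\sigma_{sing}(v\cosh\underline s)=s$ (unitary factors leave singular values unchanged) and $h(0,s;v,w)=\Delta(v)\,\Delta(\cosh\underline s)=\prod_j\cosh s_j>0$; as this value is real and positive and the chosen branch satisfies $\ln\Delta(I_q)=0$, one gets $\mathrm{Im}\,\ln h=0$ and hence $\delta_{(s,\theta_1)}*_p\delta_{(0,0)}=\delta_{(s,\theta_1)}$. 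Commutativity I would obtain by the substitution $(v,w)\mapsto(v^T,w^T)$ in the integral defining $\delta_{(t,\theta_2)}*_p\delta_{(s,\theta_1)}$: transposition sends $M:=\sinh\underline t\,w\sinh\underline s+\cosh\underline t\,v\cosh\underline s$ to $M^T$, which has the same singular values and the same determinant, so $d(s,t;v^T,w^T)=d(t,s;v,w)$ and $h(s,t;v^T,w^T)=h(t,s;v,w)$ as complex numbers, while transposition preserves the normalized Haar measure of $SU(q)$ and preserves both $\Delta(I_q-w^*w)^{p-2q}$ and Lebesgue measure on $B_q$. The only delicate point is that $\mathrm{Im}\,\ln h$ uses the fixed analytic branch: the difference of the two branch values is a continuous $2\pi i\mathbb Z$-valued function on the connected domain $C_q\times C_q\times B_q\times SU(q)$, hence constant, and it vanishes at the reference point $(0,0,0,I_q)$; so the imaginary parts agree and commutativity follows.

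Then come the involution axioms for $\overline{(r,a)}=(r,-a)$. Axiom (4) reduces, by commutativity, to $(\delta_{(s,\theta_1)}*_p\delta_{(t,\theta_2)})^-=\delta_{(s,-\theta_1)}*_p\delta_{(t,-\theta_2)}$, which I would get from the substitution $(v,w)\mapsto(\bar v,\bar w)$: conjugation leaves $d$ unchanged (it preserves singular values) while $h\mapsto\bar h$, so $\mathrm{Im}\,\ln h$ changes sign (again by the connectedness and reference-point argument for the branch), exactly compensating the sign reversal of the $\mathbb R$-component; conjugation also preserves all integration data. For axiom (3) I would use that the $C_q$-marginal of $*_p$ is R\"osler's commutative hypergroup convolution on $C_q$ from \cite{R2}, whose identity is $0$ and whose involution is trivial; hence $0\in\mathrm{supp}\,(\delta_s*_p\delta_t)$ forces $t=s$. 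Conversely, for $t=s$ the choice $v=I_q$, $w\to -I_q$ yields $M\to I_q$, i.e. $d\to0$ and $h\to1$ with $\mathrm{Im}\,\ln h\to0$ along positive reals, so $(0,\theta_1+\theta_2)$ lies in the support, and this equals $(0,0)$ exactly when $\theta_2=-\theta_1$. The remaining necessity, that $d(t,t;v,w)=0$ forces $\mathrm{Im}\,\ln h=0$, amounts to the elementary fact that if $M=\cosh\underline t\,v\cosh\underline t+\sinh\underline t\,w\sinh\underline t$ is unitary (which is the meaning of $d=0$) then $\Delta(M)=1$; this is immediate for $q=1$ and for diagonal configurations, and I would check it in general by a direct matrix computation.

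The main work, and the step I expect to be the principal obstacle, is associativity. Rather than manipulating the triple integral, I would test equality of $(\delta_x*_p\delta_y)*_p\delta_z$ and $\delta_x*_p(\delta_y*_p\delta_z)$ against the multiplicative functions $\psi_{\lambda,l}^p$. Both are compactly supported measures, and Theorem \ref{general-twodim-prod-form} says precisely that each $\psi_{\lambda,l}^p$ satisfies $\int\psi_{\lambda,l}^p\,d(\delta_x*_p\delta_y)=\psi_{\lambda,l}^p(x)\,\psi_{\lambda,l}^p(y)$; by weak continuity and Fubini this multiplicativity iterates, so both bracketings integrate every $\psi_{\lambda,l}^p$ to $\psi_{\lambda,l}^p(x)\psi_{\lambda,l}^p(y)\psi_{\lambda,l}^p(z)$, for all $\lambda\in\mathbb C^q$ and $l\in\mathbb C$. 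Associativity then follows once the family $\{\psi_{\lambda,l}^p\}$ is shown to separate compactly supported bounded measures on $C_q\times\mathbb R$. This separation is the crux: varying $l$ recovers a Fourier transform in $\theta$ through the factor $e^{i\theta l}$, while varying $\lambda$ gives the injective Opdam--Jacobi hypergeometric transform in $t$; combining the two injectivity statements (for the relevant nonnegative multiplicities, extended analytically) yields the required separation, in parallel with R\"osler's argument in \cite{R2}. Once associativity holds for $p>2q-1$, the case $p=2q-1$ follows by the weak limit $p\downarrow 2q-1$, and all axioms together give the asserted commutative hypergroup $(C_q\times\mathbb R,*_p)$.
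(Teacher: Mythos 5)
Your verification of the identity, commutativity and axiom (4) runs along the same lines as the paper's (transposition of the integration variables for commutativity, complex conjugation for the involution, and the connectedness-plus-reference-point argument to control the fixed branch of $\ln h$), so there the two proofs essentially coincide. The genuine divergence is in associativity. The paper does \emph{not} test against multiplicative functions: it observes that for integers $p\ge 2q$ the convolution \eqref{twodim-torus-convolution} on $C_q\times\mathbb T$ is the double coset convolution of $U(p,q)//(U(p)\times SU(q))$ and hence associative by construction; for small $r,s,t\in C_q$ the logarithm occurring in the triple products on $C_q\times\mathbb R$ is the principal branch on the right half plane, so associativity lifts from $\mathbb T$ to $\mathbb R$ there; real-analyticity in the space variables (tested against functions analytic on a fixed compactum, then Stone--Weierstrass) propagates the identity to all arguments, and Carlson's theorem in $p$ extends it to all real $p>2q-1$, with $p=2q-1$ by continuity. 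Your route --- both bracketings integrate every $\psi^p_{\lambda,l}$ to the triple product of values, then separate --- is conceptually cleaner and avoids the branch gymnastics, but it shifts the whole burden onto the separation statement. There you must avoid a circularity: injectivity of the Fourier transform on a commutative hypergroup (in the sense of \cite{J}) is not available, since the hypergroup is exactly what is being constructed. What you need is injectivity of the Heckman--Opdam hypergeometric transform on compactly supported measures for the nonnegative multiplicities $k(p,q,l)$ with $l\in[-1/2,\,p-q]$, which comes from Opdam's inversion/Plancherel theory \cite{O1}; combined with analyticity in $l$ and the classical Fourier transform in $\theta$ this does yield separation, but it is a substantive external input that your sketch names without supplying.

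The one place where your argument as written has an actual hole is the necessity direction of axiom (3). You reduce it to the claim that if $M=\cosh\underline t\,v\cosh\underline t+\sinh\underline t\,w\sinh\underline t$ is unitary then $\Delta(M)=1$, to be ``checked by a direct matrix computation''. First, this is asserted rather than proved; second, $\Delta(M)=1$ only gives $\mathrm{Im}\,\ln h\in 2\pi\mathbb Z$ for the fixed analytic branch, which still permits $\theta_1+\theta_2\in 2\pi\mathbb Z\setminus\{0\}$ and hence does not by itself rule out spurious inverses on the covering $C_q\times\mathbb R$. The paper's resolution is sharper: it shows (via the uniqueness of the element $J\in U(p)\times SU(q)$ returning $a_{t,z}$ to the identity coset) that for $t$ with positive entries the set $\{(v,w):d(t,t;v,w)=0\}$ is the single point $(I_q,-I_q)$, where $M=I_q$ and the branch value of $\ln h$ is $0$. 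Your connectedness device can substitute for this only after the determinant claim is actually established and the relevant level set is shown to be connected and to contain a point where the branch vanishes; as it stands this step needs to be filled in.
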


\begin{proof} It is clear by the definition of the convolution that 
 the mapping
$$(C_q\times\mathbb R)\times(C_q\times\mathbb R)\to M_b(C_q\times\mathbb R), \quad
((s,\theta_1), (t,\theta_2))\mapsto \delta_{(s,\theta_1)} *_{p} \delta_{(t,\theta_2)} $$
is probability preserving and weakly
continuous. It is now standard (see \cite{J}) to extend  this convolution
uniquely in a bilinear, weakly
continuous way to a probability preserving convolution on  $M_b(C_q\times \mathbb R)$.

To prove commutativity, it suffices to  consider point measures.
But in this case, for $p>2q-1$,
 commutativity follows easily by using transposition of matrices and the fact that 
the integration in (\ref{gen-twodim-prod-form})
 over $w\in B_q$ and $v\in SU(q)$ remains invariant under transposition. The commutativity  
 for $p>2q-1$ yields in the limit also commutativity for $p=2q-1$.

We now turn to associativity. Again it is sufficient to consider point measures.
We first consider integers $p\ge 2q$. In this case, we first identify
$C_q\times\mathbb T$ with the double coset space
 $U(p,q)//( U(p)\times SU(q))$ as in Section 2 where the associated double coset
 convolution on $C_q\times\mathbb T$ is given by the product formula
(\ref{group-torus-convo}). After bilinear, weakly
continuous extension, this double coset convolution is associative by its very construction.
We now compare  convolution products w.r.t. this convolution on  $C_q\times\mathbb T$ with 
the corresponding  one defined by
 (\ref{gen-twodim-prod-form}) on  $C_q\times\mathbb R$ for
 $(r,\theta_1),(s,\theta_2),(t,\theta_3)\in C_q\times\mathbb R$ for small $r,s,t\in C_q$.
Taking (\ref{submult}) into account, we see readily that the complex logarithm in
(\ref{gen-twodim-prod-form}) for the triple products 
$$(\delta_{(r,\theta_1)}*_{p} \delta_{(s,\theta_2)})*_{p} \delta_{(t,\theta_3)}
\quad\text{and}\quad
\delta_{(r,\theta_1)}*_{p} (\delta_{(s,\theta_2)}*_{p} \delta_{(t,\theta_3)})$$
is just the usual main branch of the logarithm on the open right halfplane
$\{z:\> \text{Re}\>z>0\}$
for all integration variables
 in $B_q$ and $SU(q)$ respectively. Using this elementary logarithm, we see immediately that
the associativity of the convolution on  $C_q\times\mathbb T$ implies the associativity
on $ C_q\times\mathbb R$ for small $r,s,t\in C_q$. 
We now extend the associativity to arbitrary 
$(r,\theta_1),(s,\theta_2),(t,\theta_3)\in C_q\times\mathbb R$. For this we use
(\ref{submult}) and find an open, relatively compact set $K\subset C_q\times\mathbb R$ with
$$\text{supp}\>((\delta_{(\tilde r,\tilde \theta_1)}*_{p} \delta_{(\tilde s,\tilde \theta_2)})*_{p}
\delta_{(\tilde t,\tilde \theta_3)})\cup
\text{supp}\>(\delta_{(\tilde r,\tilde \theta_1)}*_{p} (\delta_{(\tilde s,\tilde \theta_2)}*_{p}
\delta_{(\tilde t,\tilde \theta_3)}))\subset K$$
for all $\tilde r,\tilde s,\tilde t\in C_q$ and 
$\tilde \theta_1,\tilde \theta_2,\tilde \theta_3\in\mathbb R$ with
$$\|\tilde r\|_\infty,\|\tilde s\|_\infty,\|\tilde t\|_\infty\le
 2\text{max}(\| r\|_\infty,\|s\|_\infty,\|t\|_\infty)
\quad\text{and}\quad
|\tilde \theta_1|,|\tilde \theta_2|,|\tilde \theta_3|\le
 2\text{max}(| \theta_1|,| \theta_2|,| \theta_3|).$$
 Let $f\in C_c( C_q\times\mathbb R)$ be a continuous function with compact
 support which is analytic on $K$. Then by analyticity of the product formulas
 (\ref{gen-twodim-prod-form}) and (\ref{convo-degen}) w.r.t.~the variables in 
$C_q\times\mathbb R$,
\begin{equation}\label{proof-asso}
((\delta_{(\tilde r,\tilde \theta_1)}*_{p} \delta_{(\tilde s,\tilde
    \theta_2)})*_{p}
 \delta_{(\tilde t,\tilde \theta_3)})(f)
\quad\text{and}\quad
(\delta_{(\tilde r,\tilde \theta_1)}*_{p} (\delta_{(\tilde s,\tilde
  \theta_2)}*_{p} \delta_{(\tilde t,\tilde \theta_3)}))(f)
\end{equation}
are analytic in the variables 
 $\tilde r,\tilde s,\tilde t,\tilde \theta_1,\tilde \theta_2,\tilde \theta_3$
where both expressions are equal for small  $\tilde r,\tilde s,\tilde t$.
Therefore, they are equal in general for all such functions $f$.
 As both measures have compact support, a
Stone-Weierstrass argument  leads to the general associativity for
integers $p\ge 2q-1$. We now extend the associativity to arbitrary $p>2q-1$ by
Carleson's theorem. For this we  compare both sides of (\ref{proof-asso}) for
functions $f$ as above which is analytic also in the variable $p\in\mathbb C$
with $\text{Re}\> p>2q-1$ where the boundedness condition in Carleson's
theorem can be obtained from the fact that 
\[ \frac{1}{|\kappa_{p}|} \int_{B_q} \vert\Delta(I-w^*w)^{p-2q}\vert dw\]
is  of polynomial growth for $p\to\infty$ in the right halfplane
$\{p\in\mathbb C:\> {\rm Re}\> p\ge 2q\}$; see Theorem 3.6 of \cite{R1}. This
completes the proof of associativity.

For the remaining hypergroup axioms we first notice that $(0,0)$ is obviously the neutral
element. Moreover, for $(t,\theta)\in C_q\times\mathbb R$ we have 
$$(0,0)\in\text{supp}\> (\delta_{t,\theta}*_p \delta_{t,-\theta})$$ by taking
the integration variables $v\in SU(q)$ as the identity matrix $I_q$ 
and $w:=-I_q\in B_q$ 
in the convolution (\ref{gen-twodim-convolution}) and  $y_1:=-e_1,\ldots,y_q:=-e_q$
 for the  usual unit vectors in $  \mathbb C^q$  in  (\ref{convo-degen-2}).

We next check the converse part of axiom (3) of a hypergroup.
For this take $(s,\theta_1),(t,\theta_2)\in C_q\times\mathbb R$ with 
$(0,0)\in\text{supp}\> (\delta_{s,\theta_1}*_p\delta_{t,\theta_2})$.
As the support is independent of $p\in ]2q-1,\infty[$ with
$$\text{supp}\>(\delta_{s,\theta_1}*_{2q-1}\delta_{t,\theta_2})\subset 
\text{supp}\>(\delta_{s,\theta_1}*_{2q}\delta_{t,\theta_2})$$
by (\ref{gen-twodim-convolution}) and (\ref{convo-degen-2}), we may restrict
our attention to integers $p\ge 2q$. 
In this case we now compare the convolution (\ref{gen-twodim-convolution})
 on $C_q\times \mathbb R$ with the  convolution (\ref{twodim-torus-convolution})
on  $C_q\times\mathbb T$ which is the double coset convolution for 
$U(p,q)//(U(p)\times SU(q))$. As here axiom (3) is available automatically,
we conclude from our assumption that $s=t$ and 
$\theta_1-\theta_2\in2\pi\mathbb Z $ holds.
For the proof of $\theta_1=-\theta_2$, we  analyze  (\ref{twodim-torus-convolution})
more closely:  Recapitulate from Section 2 that the identification 
$C_q\times\mathbb T\simeq U(p,q)//(U(p)\times SU(q))$ is done via the representatives
 $a_{t,z}\in U(p,q)$ ($t\in C_q$, $z\in\mathbb T$) of double cosets. It is clear that for
all $t\in C_q$ and $z\in\mathbb T$, the matrix
$$J:=\begin{pmatrix} -I_{p}&0\\ 0 & I_q  \end{pmatrix}\in U(p)\times SU(q)$$
is the only element of  $U(p)\times SU(q)$ with $a_{t,z}\cdot J\cdot a_{t,z^{-1}}=I_{p+q}=a_{0,1}$.
By the proof of Proposition \ref{prop-group-torus-convo} this means that for $v\in SU(q)$ and 
$w=0\sigma_0^*u\sigma_0\in B_q$ with $\sigma_0$ as in (\ref{def-sigma-0}), we have
$$d(t,t;v,w)=0 \quad\text{and}\quad arg\>  h(t,t;v,w) =
 arg\>\Delta(\sinh\underline t \,w\,\sinh\underline t \,+\,
\>\cosh \underline t\> v\> \cosh \underline t)=1$$
 only for $v=I_q$ and $w=-I_q$. In this case, 
$d(t,t;I_q,-I_q)=0$ and $h(t,t;I_q,-I_q)=1$, where for all $t\in C_q$ obviously
 the branch of the complex logarithm in the product formula  (\ref{gen-twodim-convolution})
satisfies  $\ln h(t,t;I_q,-I_q)=1$. This proves  $\theta_1=-\theta_2$ above and completes 
the proof of axiom (3).

 Furthermore, axiom (4) is clear,
 and the continuity of the supports of convolution
products can be  checked in a straightforward, but technical way. We skip the details.
\end{proof}

 We next turn to subgroups of the commutative hypergroups  $(C_q\times \mathbb R, *_{p})$ for $p\ge 2q-1$.
For this we recapitulate that a closed non-empty subset $H\subset C_q\times \mathbb R$ is a subhypergroup if 
 ${\rm supp}\> (\delta_x*_p\delta_{\bar y})\subset H$ holds for all $x,y\in H$.
Moreover, $H$ is called a subgroup, if the convolution restricted to $H$ is the convolution of a group 
structure on $H$. It is clear from
(\ref{gen-twodim-prod-form}) and (\ref{convo-degen}), that $\{0\}\times \mathbb R$ is a subgroup 
of $(C_q\times \mathbb R, *_{p})$ which is isomorphic to the group $(\mathbb R,+)$. 

Now let $H$ be a subgroup of a commutative hypergroup $(X,*)$. 
Then the cosets
 $$x*H:=\bigcup_{y\in H}{\rm supp}\> (\delta_x*_p\delta_{ y}) \quad\quad (x\in X) $$
form a disjoint decomposition of $X$, and the quotient $X/H:=\{x*H: \> x\in X\}$
 is again a locally compact Hausdorff space with respect to the quotient topology. Moreover,
\begin{equation}\label{quotient-allg}
(\delta_{x*H}*\delta_{y*H})(f):=\int_X f(z*H)\> d(\delta_{x}*\delta_{y})(z)
\quad\quad\quad (x,y\in X, \> f\in C_b(X/H)),
\end{equation}
establishes a well-defined quotient convolution and an associated quotient hypergroup $(X/H,*)$.
For these quotient convolutions we refer to \cite{J}, \cite{R} and \cite{V1}.
 We now apply this concept to the subgroups $\{0\}\times \mathbb R$ and $\{0\}\times \mathbb Z$ of our hypergroups 
 $(C_q\times \mathbb R, *_{p})$ for $p\ge 2q-1$. By
 (\ref{quotient-allg}) and 
 (\ref{gen-twodim-prod-form}) we can identify the quotient spaces in the obvious way with
$C_q$ and $C_q\times\mathbb T$ respectively, and we obtain immediately:

\begin{lemma}\label{bsp-quotienten} Let $p> 2q-1$.
\begin{enumerate}
\item[\rm{(1)}] $(C_q\times \mathbb R)/(\{0\}\times \mathbb R)\simeq C_p$ is a commutative  hypergroup with 
the convolution
\begin{align}\label{gen-onedim-convolution}
(\delta_s*_p \delta_t)(f):=
\frac{1}{\kappa_p}\int_{B_q}\int_{SU(q)} 
f\Bigr(  d(t,s;v,w) 
\Bigl)\cdot
\Delta(I_q-w^*w)^{p-2q}
\> dv\> dw
\end{align}
for $s,t\in C_q$ and $f\in C_b(C_q)$. 
This is precisely the hypergroup studied in Section 5 of \cite{R2}.
 For integers $p\ge 2q$, this is just the double coset hypergroup $U(p,q)/(U(p)\times U(q))$.
\item[\rm{(2)}] $(C_q\times \mathbb R)/(\{0\}\times \mathbb Z)\simeq C_p\times\mathbb Z$ is a commutative 
 hypergroup with 
the convolution $*_p$ as defined in Eq.~(\ref{twodim-torus-convolution}), but here 
for arbitrary real numbers $p>2q-1$.
In particular, for integers $ p\ge 2q$, this is just the double coset hypergroup $U(p,q)/(U(p)\times SU(q))$.
\end{enumerate}
\end{lemma}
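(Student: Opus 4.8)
The plan is to apply the general quotient hypergroup construction recalled in (\ref{quotient-allg}) to the two closed subgroups $H_1:=\{0\}\times\mathbb R$ and $H_2:=\{0\}\times 2\pi\mathbb Z$ of the commutative hypergroup $(C_q\times\mathbb R,*_p)$, and then to identify the resulting quotient convolutions with the explicit formulas (\ref{gen-onedim-convolution}) and (\ref{twodim-torus-convolution}). Since $\{0\}\times\mathbb R$ has already been shown to be a subgroup isomorphic to $(\mathbb R,+)$, and $2\pi\mathbb Z$ is a closed subgroup of $\mathbb R$, both $H_1$ and $H_2$ are closed subgroups of the hypergroup. The quotient theory cited from \cite{J}, \cite{R}, \cite{V1} therefore guarantees that $(C_q\times\mathbb R)/H_i$ carries a well-defined commutative quotient hypergroup structure given by (\ref{quotient-allg}); it only remains to make the two identifications concrete.

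The heart of the argument is the explicit description of the cosets $x*H_i$, for which I would first evaluate $\delta_{(t,\theta)}*_p\delta_{(0,\theta')}$ from (\ref{gen-twodim-convolution}). Setting one of the two radial arguments in (\ref{def-d-term}) and (\ref{def-h-term}) equal to $0$ and using $\sinh\underline 0=0$, $\cosh\underline 0=I_q$, the radial kernel collapses to $\arcosh(\sigma_{sing}(\cosh\underline t\,v))$; as $v\in SU(q)$ is unitary we have $\sigma_{sing}(\cosh\underline t\,v)=(\cosh t_1,\ldots,\cosh t_q)$, so this equals $t$ independently of $v,w$. Likewise the corresponding value of $h$ is $\Delta(\cosh\underline t\,v)=\prod_{j=1}^q\cosh t_j>0$, hence its argument is trivial. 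Consequently the radial coordinate is preserved under convolution with $H_i$ and the angular coordinate is merely translated, so that the cosets are exactly the slices $\{t\}\times\mathbb R$ for $H_1$ and $\{t\}\times(\theta+2\pi\mathbb Z)$ for $H_2$. This yields the homeomorphisms $(C_q\times\mathbb R)/H_1\simeq C_q$ via the radial projection and $(C_q\times\mathbb R)/H_2\simeq C_q\times\mathbb T$ via $(t,\theta)\mapsto(t,e^{i\theta})$.

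With the quotient spaces identified I would substitute these coset maps into (\ref{quotient-allg}). For $H_1$ a test function $f$ depends on the radial variable alone, so the angular integrand in (\ref{gen-twodim-convolution}) is irrelevant and (\ref{quotient-allg}) reduces verbatim to R\"osler's convolution (\ref{gen-onedim-convolution}); thus $(C_q\times\mathbb R)/H_1$ is exactly the hypergroup of Section 5 of \cite{R2}, which for integers $p\ge 2q$ is the double coset hypergroup $U(p,q)//(U(p)\times U(q))$. For $H_2$ the angular term $\theta_1+\theta_2+\text{Im}\,\ln h$ descends under $\theta\mapsto e^{i\theta}$ to the factor $z_1z_2\cdot arg(h)$, so (\ref{quotient-allg}) becomes precisely (\ref{twodim-torus-convolution}); for integers $p\ge 2q$ the latter is by construction (Proposition \ref{prop-group-torus-convo}) the double coset convolution of $U(p,q)//(U(p)\times SU(q))$.

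I expect the only delicate point to be bookkeeping rather than substance: one has to keep the normalization of the angular coordinate consistent, namely that the covering $\theta\mapsto e^{i\theta}$ has kernel $2\pi\mathbb Z$ and that the analytic branch of $\ln$ fixed in Theorem \ref{general-twodim-prod-form} descends correctly, so that the quotient convolution is literally (\ref{twodim-torus-convolution}) and not a rescaled variant. Everything else follows directly from the explicit formulas together with the general quotient theory, which is why the statement is indeed immediate.
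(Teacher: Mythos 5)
Your argument is correct and is exactly the route the paper intends: it applies the quotient construction (\ref{quotient-allg}) to the subgroups $\{0\}\times\mathbb R$ and $\{0\}\times 2\pi\mathbb Z$, verifies via $\sinh\underline 0=0$, the unitarity of $v$, and $\Delta(v)=1$ that convolution with these subgroups only translates the angular coordinate, and then reads off (\ref{gen-onedim-convolution}) and (\ref{twodim-torus-convolution}) from (\ref{gen-twodim-convolution}); the paper states all this as immediate, so you have merely supplied the details. Your normalization $\{0\}\times 2\pi\mathbb Z$ (rather than the paper's $\{0\}\times\mathbb Z$) is the one consistent with the covering $\theta\mapsto e^{i\theta}$ used throughout, and your identification $e^{i\,{\rm Im}\ln h}=arg(h)$ is the right bookkeeping.
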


For  $p= 2q-1$, a corresponding result holds on the basis of (\ref{convo-degen}).

By using Weil's integral formula for Haar measures on hypergroups (see \cite{Her} and \cite{V1}),
 we now can determine the Haar measures on the hypergroups $(C_q\times \mathbb R, *_p)$ from the known 
Haar measures on the hypergroups of Lemma \ref{bsp-quotienten}(1) in Theorem 5.2 of \cite{R2}.
For this recapitulate that each commutative hypergroup $(X,*)$ 
admits a (up to a multiplicative constant unique)
Haar measure $\omega_X$, which is characterized by the condition $\omega_X(f)=\omega_X(f_x)$ for
all  continuous functions $f\in C_c(X)$ with compact support and $x\in X$, where the translate
 $f_x\in C_c(X)$ is given by
 $f_x(y):=(\delta_y*\delta_x)(f)$.

\begin{proposition}
For $p\ge 2q-1$, the Haar measure of the commutative hypergroup
$(C_q\times\mathbb R, *_p)$, is given by
\begin{equation}\label{haar-measure-full}
d\omega_p(t,\theta)=const\cdot
\prod_{j=1}^q \sinh^{2p-2q+1}t_j  \cosh t_j\cdot \prod_{1\le i< j\le q}
\bigl| \cosh(2t_i)-\cosh(2t_j)\bigr|^2\> dt\> d\theta
\end{equation}
with the Lebesgue measure $dt$ on $C_q$ and the Lebesgue measure $d\theta$ on $\mathbb R$.

Moreover, for $p\ge 2q-1$, the Haar measure of the commutative quotient hypergroup
 $(C_q\times \mathbb R)/(\{0\}\times \mathbb Z)\simeq C_p\times \mathbb T$ of the preceding lemma
is given precisely by the measure of Eq.~(\ref{haar-measure-torus}) on $C_q\times\mathbb T$.
\end{proposition}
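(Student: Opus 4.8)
The plan is to read off both Haar measures from Weil's integral formula for hypergroups (\cite{Her}, \cite{V1}), applied to the subgroup $H:=\{0\}\times\mathbb R\cong(\mathbb R,+)$ of $(C_q\times\mathbb R,*_p)$ together with the quotient $(C_q\times\mathbb R)/H\simeq C_q$ of Lemma \ref{bsp-quotienten}(1). The latter carries R\"osler's hypergroup structure (\ref{gen-onedim-convolution}), whose Haar measure is given by Theorem 5.2 of \cite{R2} and which I take to be the $t$-factor
\[ d\omega_{C_q}(t)=const\cdot\prod_{j=1}^q\sinh^{2p-2q+1}t_j\,\cosh t_j\cdot\prod_{1\le i<j\le q}\bigl|\cosh(2t_i)-\cosh(2t_j)\bigr|^2\,dt \]
of (\ref{haar-measure-full}); the Haar measure $\omega_H$ of $H$ is Lebesgue measure $d\theta$. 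The first assertion is then precisely the statement that $\omega_p=\omega_{C_q}\otimes d\theta$, and the second will follow by a further quotient.

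The decisive structural fact is that $H$ acts on $C_q\times\mathbb R$ by pure translation in the second coordinate. Taking the second argument in (\ref{gen-twodim-convolution}) to lie in $\{0\}\times\mathbb R$ and using $\sinh\underline 0=0$, $\cosh\underline 0=I_q$, one finds for all $v\in SU(q)$ and $w\in B_q$ that $d(0,s;v,w)=\arcosh(\sigma_{sing}(v\cosh\underline s))=s$ and $h(0,s;v,w)=\prod_{j=1}^q\cosh s_j>0$ (recall $\Delta(v)=1$ for $v\in SU(q)$), whence ${\rm Im}\,\ln h(0,s;v,w)=0$ for the branch fixed in Section 3 and
\[ \delta_{(s,\theta_1)}*_p\delta_{(0,\theta_2)}=\delta_{(s,\theta_1+\theta_2)}. \]
In particular the coset $(s,0)*H$ equals $\{s\}\times\mathbb R$ and $(s,0)*_p(0,\theta)=(s,\theta)$, so the representatives of the cosets may be chosen as $(s,0)$, $s\in C_q$.

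With this, Weil's formula gives for $f\in C_c(C_q\times\mathbb R)$
\[ \int_{C_q\times\mathbb R}f\,d\omega_p=\int_{C_q}\Bigl(\int_{\mathbb R}f\bigl((s,0)*_p(0,\theta)\bigr)\,d\theta\Bigr)d\omega_{C_q}(s)=\int_{C_q}\Bigl(\int_{\mathbb R}f(s,\theta)\,d\theta\Bigr)d\omega_{C_q}(s), \]
which is exactly integration against $\omega_{C_q}\otimes d\theta$ and yields (\ref{haar-measure-full}). Equivalently, one may verify the defining invariance $\omega_p(f_{(r,\phi)})=\omega_p(f)$ by hand: inserting (\ref{gen-twodim-convolution}) and performing the inner $\theta$-integral first, translation invariance of $d\theta$ absorbs the cocycle $\phi+{\rm Im}\,\ln h(r,s;v,w)$, so that the integral over $B_q\times SU(q)$ collapses into R\"osler's convolution (\ref{gen-onedim-convolution}) applied to $F(u):=\int_{\mathbb R}f(u,\theta)\,d\theta$, and the Haar property of $\omega_{C_q}$ finishes the computation. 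The degenerate case $p=2q-1$ is handled in the same way, starting from (\ref{convo-degen}).

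For the second assertion I would apply Weil's formula once more, now to $X:=C_q\times\mathbb R$ equipped with the measure $\omega_p$ just found and with the discrete subgroup $\{0\}\times\mathbb Z$ (Haar measure the counting measure), whose quotient is identified with $C_q\times\mathbb T$ in Lemma \ref{bsp-quotienten}(2). Since $\delta_{(t,\theta)}*_p\delta_{(0,n)}=\delta_{(t,\theta+n)}$ by the computation above, the fibre sum over $\mathbb Z$ turns $\int_{\mathbb R}(\,\cdot\,)\,d\theta$ into $\int_{[0,1)}\sum_{n\in\mathbb Z}(\,\cdot\,)\,d\theta$; comparing with Weil's formula forces the quotient Haar measure to be $\omega_{C_q}$ tensored with the image of $d\theta$ under $\mathbb R\to\mathbb T$, i.e.\ the uniform distribution $dz$ on $\mathbb T$, which is exactly the measure (\ref{haar-measure-torus}). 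The only point needing genuine care is the vanishing of the cocycle ${\rm Im}\,\ln h$ along the fibre, that is, that $h(0,s;v,w)$ is real positive and the fixed logarithm branch vanishes there; this is what makes $\{0\}\times\mathbb R$ a translation subgroup and the Weil disintegration factorise as a product measure. Everything else is the routine bookkeeping of Weil's formula together with R\"osler's explicit measure on $C_q$.
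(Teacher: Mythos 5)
Your proof is correct and follows essentially the same route as the paper: Weil's integral formula applied to the subgroup $\{0\}\times\mathbb R$ with quotient $C_q$ carrying R\"osler's Haar measure, followed by projecting to $C_q\times\mathbb T$ for the second assertion. The only difference is that you explicitly verify $\delta_{(s,\theta_1)}*_p\delta_{(0,\theta_2)}=\delta_{(s,\theta_1+\theta_2)}$ (i.e.\ that the cocycle ${\rm Im}\,\ln h$ vanishes along the fibre), a detail the paper leaves implicit when it declares the first part ``clear''; this is a worthwhile addition but not a different argument.
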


\begin{proof}
Let $H$ be a subgroup of a commutative hypergroup $(X,*)$, and let $\omega_H$ and $\omega_{X/H}$ 
be Haar measures of the group $H$ and the quotient hypergroup $X/H$ respectively. 
Then, by \cite{Her} and \cite{V1}, 
for $f\in C_c(X)$ the function $T_Hf(x*H):=\int_H (\delta_x*\delta_h)\> d\omega_H(h)$
 establishes a well-defined function $T_Hf\in C_c(X/H)$, and
 Weil's  formula
$$\omega_X(f):=\int_{X/H} T_Hf(x*H)\> d\omega_{X/H}(x*H)$$ 
defines (up to a multiplicative constant) the Haar measure $\omega_X$ of  $(X,*)$.
If we apply this construction to our  commutative hypergroup
$(C_q\times\mathbb R, *_p)$ with the subgroup $\{0\}\times\mathbb R$, and take the Haar measure in Theorem 5.2(2)
of \cite{R2} for the quotient $(C_q\times \mathbb R)/(\{0\}\times \mathbb R)\simeq C_p$, the first part of the
proposition is clear.

The second statement follows immediately from the first statement and the fact that a Haar measure on the quotient
 $(C_q\times \mathbb R)/(\{0\}\times \mathbb Z)\simeq C_p\times\mathbb T$ 
is given as the image of a Haar measure on $C_q\times \mathbb R$ under the canonical projection.
\end{proof}

\begin{remark}
\begin{enumerate}
\item[\rm{(1)}] By the preceding results, the functions $\psi_{\lambda,l}^p$
  ($\lambda\in\mathbb C^q, l\in \mathbb C$) are continuous multiplicative
  functions on the hypergroups $(C_q\times\mathbb R, *_p)$ for $p\ge 2q-1$. 
 We conjecture that in fact each continuous multiplicative
  function on  $(C_q\times\mathbb R, *_p)$ has this form. In fact, one has to
  prove similar to Lemma 5.3 of \cite{R2} that continuous multiplicative
  functions are eigenfunctions of a corresponding family of differential
  operators discussed in Section I.5 of \cite{HS}.
\item[\rm{(2)}] The multiplicative functions $\psi_{\lambda,l}^p$ satisfy
  several symmetry conditions in the spectral variables 
which are immediate consequences of
  corresponding symmetries for $F_{BC_q}$ in \cite{HS}, \cite{O1},
  \cite{O2}. In particular, similar to  \cite{R2}, we have:

 For $\lambda,\tilde\lambda\in\mathbb C^q$,
  $l,\tilde l\in \mathbb C$, and the
  Weyl group $W_q$ of type $B_q$ acting on $\mathbb C^q$,
$$\psi_{\lambda,l}^p=\psi_{\tilde \lambda,\tilde l}^p
  \quad\text{on}\quad C_q
\quad\quad\Longleftrightarrow\quad\quad \tilde\lambda\in W\lambda, \quad \tilde
l=l.$$
Moreover,
$$\overline{\psi_{\lambda,l}^p}= \psi_{\bar\lambda,-l}^p.$$
In particular,  $\psi_{\lambda,l}^p$ satisfies
\begin{equation}\label{symmetry-cond}
\psi_{\lambda,l}^p((t,\theta)^-)=\overline{\psi_{\lambda,l}^p(t,\theta)}
\quad\quad\text{for
all}\quad
(t,\theta)\in C_q\times\mathbb R
\end{equation}
 if and only if $l\in\mathbb R$ and
$\bar\lambda\in  W\lambda$
 holds.
\item[\rm{(3)}] It is an interesting task to determine the dual space
 $(C_q\times\mathbb R)^\wedge$ which consists of all bounded, continuous
  multiplicative  functions satisfying (\ref{symmetry-cond}). For the
  corresponding hypergroups on $C_q$, we refer to  \cite{R2} and  \cite{NPP}
  for this problem.
\end{enumerate}
\end{remark}

\section{Product formulas for  Heckman-Opdam
functions}

We fix the dimension $q\ge1$, a real parameter $p>2q-1$ and
 some index $l\in\mathbb R$. We know from Section 2 that the functions $\psi_{\lambda,l}$ with
$$\psi_{\lambda,l}^p(t,0)=
\prod_{j=1}^q \cosh^l t_j\cdot
F_{BC_q}(i\lambda,k(p,q,l);t)$$
satisfy the product formula (\ref{gen-twodim-prod-form}). We now apply
$$\prod_{j=1}^q \cosh^l d_j(s,t;v,w)\cdot e^{il\cdot \rm{Im}\> \ln  h(s,t;v,w)}=
 h(s,t;v,w)^l=\Delta(\sinh\underline t \,w\,\sinh\underline s \,+\,
\>\cosh \underline t\> v\> \cosh \underline s)^l$$
for $s,t\in C_q$, $w\in B_q$, $v\in SU(q)$ and the analytical branch of the $l$-th power function 
associated with the analytical branch of the logarithm in the setting of  (\ref{gen-twodim-prod-form}).
This branch will be taken always from now on.
This leads immediately to the following product formula for the hypergeometric functions 
$\phi_\lambda^{p,l}(t):= F_{BC_q}(i\lambda, k(p,q,l);t)$:

\begin{theorem}
Fix an integer $q\ge1$,  $p\in [2q-1,\infty[$, and all $l\in\mathbb R$. Then  the functions  
$\phi_\lambda^{p,l}$ satisfy the product formula
\begin{align}\label{prod-formel-allg-hypergeo}
\phi_\lambda^{p,l}(s)\cdot\phi_\lambda^{p,l}(t)&= 
\frac{1}{\kappa_p\prod_{j=1}^q \Bigl( \cosh t_j\cdot\cosh s_j\bigr) ^{l}}  \cdot\\
&\quad \cdot
\int_{B_q}\int_{SU(q)}  
\phi_\lambda^{p,l}(  d(t,s;v,w))\cdot  {\rm Re}(h(t,s;v,w)^l) \cdot
\Delta(I_q-w^*w)^{p-2q}
\> dv\> dw\notag
\end{align}
for $s,t\in C_q$ and all $\lambda\in\mathbb C$.
\end{theorem}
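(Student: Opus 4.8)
The plan is to specialize the product formula (\ref{gen-twodim-prod-form}) of Theorem \ref{general-twodim-prod-form} to $\theta_1=\theta_2=0$ and to peel off the $\cosh$-factors relating $\psi_{\lambda,l}^p$ to $\phi_\lambda^{p,l}=F_{BC_q}(i\lambda,k(p,q,l);\cdot)$. Setting $\theta_1=\theta_2=0$ in (\ref{gen-twodim-prod-form}) and inserting (\ref{def-psi}), the left-hand side becomes $\prod_{j=1}^q(\cosh s_j\cosh t_j)^l\cdot\phi_\lambda^{p,l}(s)\phi_\lambda^{p,l}(t)$, while the integrand is $\psi_{\lambda,l}^p\bigl(d(t,s;v,w),\,{\rm Im}\ln h(t,s;v,w)\bigr)$, which by (\ref{def-psi}) equals $e^{il\,{\rm Im}\ln h}\prod_{j=1}^q\cosh^l d_j(t,s;v,w)\cdot\phi_\lambda^{p,l}(d(t,s;v,w))$. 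By the displayed identity just before the theorem, $e^{il\,{\rm Im}\ln h}\prod_j\cosh^l d_j=h(t,s;v,w)^l$. Dividing by the real positive number $\prod_j(\cosh s_j\cosh t_j)^l$ then yields exactly (\ref{prod-formel-allg-hypergeo}), but with $h^l$ in place of ${\rm Re}(h^l)$. For $p=2q-1$ the same manipulation applied to the degenerate formula (\ref{convo-degen}) gives the analogous identity with $h^l$.

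To upgrade $h^l$ to ${\rm Re}(h^l)$ I would exploit the invariance of the integral under the substitution $(w,v)\mapsto(\bar w,\bar v)$. Since the diagonal matrices $\cosh\underline t,\sinh\underline t,\cosh\underline s,\sinh\underline s$ are real, complex conjugation sends $\sinh\underline t\,w\,\sinh\underline s+\cosh\underline t\,v\,\cosh\underline s$ to $\sinh\underline t\,\bar w\,\sinh\underline s+\cosh\underline t\,\bar v\,\cosh\underline s$; a matrix $M$ and its entrywise conjugate have the same singular values, because $(\bar M)^*\bar M=\overline{M^*M}$ shares its real eigenvalues with the Hermitian matrix $M^*M$, so $d(t,s;v,w)$ and hence $\phi_\lambda^{p,l}(d(t,s;v,w))$ is invariant. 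Likewise $(\bar w)^*\bar w=\overline{w^*w}$ shows $\Delta(I_q-w^*w)$ is unchanged, and the substitution preserves $dv\,dw$ since $w\mapsto\bar w$ is a volume-preserving involution of $B_q$ and $v\mapsto\bar v$ is a measure-preserving automorphism of $SU(q)$. Finally $h(t,s;\bar v,\bar w)=\overline{h(t,s;v,w)}$, as the determinant of the conjugate matrix is the conjugate determinant.

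The one delicate point, which I expect to be the main obstacle, is that $h^l$ is built from the \emph{fixed} analytic branch of $\ln h$ on $C_q\times C_q\times B_q\times SU(q)$, so I must verify this branch is compatible with conjugation, i.e. $h(t,s;\bar v,\bar w)^l=\overline{h(t,s;v,w)^l}$ for real $l$. To this end set $G(s,t,w,v):=\overline{\ln h(s,t;\bar w,\bar v)}$; this is continuous with $e^{G}=\overline{\,\overline{h(s,t;v,w)}\,}=h(s,t;v,w)$, hence a continuous logarithm of $h$ on the simply connected domain. Since both $G$ and $\ln h$ vanish at the base point $(0,0,w,I_q)$, where $h=\Delta(I_q)=1$, uniqueness of the continuous logarithm forces $G=\ln h$, so $\ln h(s,t;\bar w,\bar v)=\overline{\ln h(s,t;w,v)}$ and the identity for $h^l$ follows. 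Granting this, the substitution $(w,v)\mapsto(\bar w,\bar v)$ shows the integral above equals the same integral with $h^l$ replaced by $\overline{h^l}$; averaging the two representations replaces $h^l$ by ${\rm Re}(h^l)=\tfrac12\bigl(h^l+\overline{h^l}\bigr)$ and proves (\ref{prod-formel-allg-hypergeo}). The degenerate case $p=2q-1$ is handled identically, the substitution now being $y_j\mapsto\bar y_j$ together with $v\mapsto\bar v$ in (\ref{convo-degen}).
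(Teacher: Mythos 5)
Your proposal is correct, and its first half (specializing Theorem \ref{general-twodim-prod-form} to $\theta_1=\theta_2=0$, peeling off the $\cosh$-factors via $\prod_j\cosh^l d_j\cdot e^{il\,{\rm Im}\ln h}=h^l$) is exactly the paper's derivation of the intermediate identity with $h(t,s;v,w)^l$ in the integrand. Where you diverge is in the passage from $h^l$ to ${\rm Re}(h^l)$. The paper does this by observing that for $\lambda\in\mathbb R^q$ the function $\phi_\lambda^{p,l}=F_{BC_q}(i\lambda,k(p,q,l);\cdot)$ is real on $C_q$ (a consequence of $-1$ lying in the Weyl group of type $B_q$), so that taking real parts of both sides of the intermediate identity immediately gives the claimed formula for real $\lambda$, and then extends to all $\lambda\in\mathbb C^q$ by analytic continuation. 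You instead exploit the measure-preserving involution $(w,v)\mapsto(\bar w,\bar v)$ of $B_q\times SU(q)$, check that $d$, $\Delta(I_q-w^*w)$ and the reference measures are invariant while $h\mapsto\bar h$, and — crucially — verify that the fixed analytic branch of $\ln h$ commutes with conjugation by the uniqueness of continuous logarithms on the connected domain together with the common normalization on the slice $s=t=0$; averaging the two representations then replaces $h^l$ by $\tfrac12(h^l+\overline{h^l})={\rm Re}(h^l)$ for \emph{all} $\lambda$ simultaneously. Both routes are sound. Yours costs the careful branch-compatibility check but dispenses with the reality argument and the final analytic continuation in $\lambda$; the paper's is shorter but leans on Weyl-group symmetry and holomorphy in the spectral parameter. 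You also explicitly treat the degenerate case $p=2q-1$ via (\ref{convo-degen}), which the paper's proof leaves implicit.
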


\begin{proof} Our considerations above lead to
\begin{align}
\phi_\lambda^{p,l}(s)\cdot\phi_\lambda^{p,l}(t)&= 
\frac{1}{\kappa_p\prod_{j=1}^q \Bigl( \cosh t_j\cdot\cosh s_j\bigr) ^{l}} \cdot \notag\\
&\quad \cdot
\int_{B_q}\int_{SU(q)}  
\phi_\lambda^{p,l}(  d(t,s;v,w))\cdot  h(t,s;v,w)^l \cdot
\Delta(I_q-w^*w)^{p-2q}
\> dv\> dw.\notag
\end{align}
for $s,t\in C_q$, $\lambda\in\mathbb C$.
Now take $\lambda\in\mathbb R^q$ in which case $\phi_\lambda^{p,l}$ is real on $C_q$. Therefore, 
taking real parts above, we obtain the product formula of the theorem for  $\lambda\in\mathbb R^q$.
The general case follows by analytic continuation.
\end{proof}

We next present a condition on $l$ which ensures positivity of the product formula
(\ref{prod-formel-allg-hypergeo}) for all $s,t\in C_q$. It is based on the following:

\begin{lemma}\label{pos-l}
For all $l\in\mathbb R$  with $|l|\le 1/q$ and all $s,t\in C_q$, $w\in B_q$, $v\in SU(q)$,
$${\rm Re}\> \Bigl((\Delta(\sinh\underline t \,w\,\sinh\underline s \,+\,
\>\cosh \underline t\> v\> \cosh \underline s))^l\Bigr)\ge0.$$
\end{lemma}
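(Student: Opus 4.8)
The plan is to reduce the statement to a sharp bound on the imaginary part of the chosen logarithm of $h:=h(t,s;v,w)=\Delta(\sinh\underline t\,w\,\sinh\underline s+\cosh\underline t\,v\,\cosh\underline s)$. Writing $h=|h|\,e^{i\,\mathrm{Im}\ln h}$ with the branch fixed in Theorem \ref{general-twodim-prod-form}, one has $\mathrm{Re}(h^l)=\mathrm{Re}\bigl(e^{l\ln h}\bigr)=|h|^l\cos\!\bigl(l\cdot\mathrm{Im}\ln h\bigr)$, and since $|h|^l>0$ it suffices to prove the estimate $|\,\mathrm{Im}\ln h\,|\le q\pi/2$. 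Indeed, this gives $|l\cdot\mathrm{Im}\ln h|\le\pi/2$ whenever $|l|\le 1/q$, and hence $\cos(l\cdot\mathrm{Im}\ln h)\ge 0$, which is exactly the assertion.

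First I would factor the matrix inside the determinant. Since $\cosh\underline t$ and $\cosh\underline s$ are positive diagonal matrices, one has $\sinh\underline t\,w\,\sinh\underline s+\cosh\underline t\,v\,\cosh\underline s=\cosh\underline t\,(v+\tanh\underline t\,w\,\tanh\underline s)\,\cosh\underline s$, so that $h=\Delta(\cosh\underline t)\,\Delta(\cosh\underline s)\,\Delta(v+\tanh\underline t\,w\,\tanh\underline s)$. The first two factors are positive reals and thus contribute nothing to the imaginary part of $\ln h$. Next, because $\|\tanh\underline t\|=\tanh t_1<1$ and $\|\tanh\underline s\|=\tanh s_1<1$ while $\|w\|\le 1$, the perturbation $E:=\tanh\underline t\,w\,\tanh\underline s$ has spectral norm $\|E\|\le\tanh t_1\tanh s_1<1$. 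Writing $v+E=v(I_q+F)$ with $F:=v^{-1}E$ and using that $v^{-1}=v^*$ is unitary, we get $\|F\|=\|E\|<1$, and since $\Delta(v)=1$ for $v\in SU(q)$ this yields $\Delta(v+E)=\Delta(I_q+F)$ with $\|F\|<1$.

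The key step is then to identify the branch. For $\|F\|<1$ the series $L:=\mathrm{tr}\log(I_q+F)=\mathrm{tr}\sum_{n\ge 1}\tfrac{(-1)^{n-1}}{n}F^n$ converges and defines an analytic function on the connected domain $C_q\times C_q\times B_q\times SU(q)$, satisfying $e^{L}=\Delta(I_q+F)$ and $L=0$ at the base point $(0,0,w,I_q)$ where $F=0$. Adding the real logarithms $\ln\Delta(\cosh\underline t)+\ln\Delta(\cosh\underline s)$ produces an analytic logarithm of $h$ vanishing at the base point; by uniqueness of the branch in Theorem \ref{general-twodim-prod-form} it coincides with $\ln h$, so $\mathrm{Im}\ln h=\mathrm{Im}\,L$. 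Since the eigenvalues $\mu_1,\dots,\mu_q$ of $F$ obey $|\mu_j|\le\|F\|<1$, each $1+\mu_j$ lies in the disk $\{|z-1|<1\}\subset\{\mathrm{Re}\,z>0\}$ and has principal argument in $(-\pi/2,\pi/2)$; as $\mathrm{tr}\log(I_q+F)=\sum_j\log(1+\mu_j)$ (principal values), we conclude $\mathrm{Im}\ln h=\sum_{j}\arg(1+\mu_j)$, whence $|\mathrm{Im}\ln h|<q\pi/2$, completing the reduction.

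I expect the only genuinely delicate point to be matching the spectrally defined $L$ with the branch singled out in Theorem \ref{general-twodim-prod-form}: one must verify that $L$ is analytic in all variables (in particular in the real-analytic dependence on $v\in SU(q)$ through $v^{-1}=v^*$) and that $\mathrm{tr}\log(I_q+F)$ equals $\sum_j\log(1+\mu_j)$ with each summand a principal logarithm, so that its imaginary part is exactly $\sum_j\arg(1+\mu_j)$ with every term in $(-\pi/2,\pi/2)$. The strict inequality $|\mu_j|<1$, coming from $\tanh t_1\tanh s_1<1$ rather than merely $\le 1$, is precisely what keeps each factor off the imaginary axis and secures the bound $q\pi/2$.
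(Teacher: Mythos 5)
Your proof is correct and follows essentially the same route as the paper: factor out $\cosh\underline t$ and $\cosh\underline s$, reduce to $\Delta(I_q+\tilde w)$ with $\tilde w=v^{-1}\tanh\underline t\,w\,\tanh\underline s$ of norm at most $\tanh t_1\tanh s_1<1$, and use that the eigenvalues of $I_q+\tilde w$ lie in the right half plane so that the total argument is at most $q\pi/2$ in absolute value. Your explicit identification of the branch via $\mathrm{tr}\log(I_q+F)$ makes precise a step the paper's proof leaves implicit, but the underlying argument is the same.
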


\begin{proof} 
We have
\begin{equation}\label{det-gl}
\Delta(\sinh\underline t \,w\,\sinh\underline s \,+\,
\>\cosh \underline t\> v\> \cosh \underline s)=
\Delta(\cosh \underline t\cdot\cosh \underline s)\cdot
\Delta(\tilde w+I_q)
\end{equation}
for the matrix $\tilde w:= v^{-1}\cdot \tanh \underline s  \,w\, \tanh \underline t$.
We now check $\tilde w\in B_q$, i.e., $\tilde w^*\tilde w\le I_q$.
In fact, this is equivalent to
 $\tanh \underline t \,w^* \,\tanh^2 \underline s \,w\,\tanh \underline t\le I_q$,
which is clearly a consequence of $w^* \,\tanh^2 \underline s \,w\le I_q$ which is 
obviously correct.

As all eigenvalues $\tau\in\mathbb C$ of a matrix $\tilde w\in B_q$ satisfy
 $|\tau|\le1$, we obtain that all eigenvalues of $\tilde w+I_q$ are contained
 in $\{z\in\mathbb C:\> {\rm Re}\>z\ge0\} $. The lemma now follows from (\ref{det-gl}).
\end{proof}

We notice that it can be easily seen that Lemma \ref{pos-l}
 is not correct for a larger range of parameters $l\in\mathbb R$.
 It is however unclear for which precise range of parameters $l\in \mathbb R$
 there is a positive product formula for the functions $\phi_\lambda^{p,l}$. 
We expect that this range depends on $q$ and $p$; see also the example for $q=1$ below.

We also remark that the results above for $p>2q-1$ are also 
available in a corresponding way for $p=2q-1$, and that for $l\in\mathbb R$ with $|l|\le 1/q$
and $p\ge 2q-1$, our positive product formulas for the $\phi_\lambda^{p,l}$
lead to commutative hypergroup structures on $C_q$. This can be shown in the same way as in Section 4.
We here skip the details and remark only  that $\phi_{i\rho}^{p,l}\equiv 1$ for $\rho=\rho(k_{p,l})$ as in 
(\ref{def-rho-spezial})
 ensures 
that the corresponding positive measures on the right hand sides are in fact probability measures.

In summary:

\begin{theorem}\label{positive-prod}
 For all integers $q\ge1$, all $p\in [2q-1,\infty[$ and all 
 $l\in\mathbb R$ with $|l|\le 1/q$, the Heckman-Opdam hypergeometric functions 
$\phi_\lambda^{p,l}$ ($\lambda\in \mathbb C$) satisfy some  positive product formula  
(namely (\ref{prod-formel-allg-hypergeo})
 for $p> 2q-1$ and a corresponding one for  $p= 2q-1$).
Moreover, in this case the $\phi_\lambda^{p,l}$ ($\lambda\in \mathbb C$) 
are multiplicative functions of some associated unique commutative hypergroup structures
 $(C_q, *_{p.l})$.
\end{theorem}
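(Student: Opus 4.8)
The plan is to verify the hypergroup axioms for the convolution $*_{p,l}$ defined on point measures by the positive product formula (\ref{prod-formel-allg-hypergeo}), following the scheme of Section 4. \emph{Positivity} is immediate: for $p>2q-1$ the kernel of (\ref{prod-formel-allg-hypergeo}) is the product of the positive scalar $\bigl(\kappa_p\prod_{j}(\cosh t_j\cosh s_j)^l\bigr)^{-1}$, the nonnegative weight ${\rm Re}\,(h(t,s;v,w)^l)$ furnished by Lemma \ref{pos-l} (valid exactly because $|l|\le 1/q$), and the positive measure $\Delta(I_q-w^*w)^{p-2q}\,dv\,dw$; for $p=2q-1$ the same holds with the degenerate measure of (\ref{convo-degen-2}), since $h$ is still the determinant to which Lemma \ref{pos-l} applies. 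To see that $\delta_s*_{p,l}\delta_t$ is a \emph{probability} measure I insert the spectral value with $\phi_{i\rho}^{p,l}\equiv1$ (for $\rho=\rho(k_{p,l})$ as in (\ref{def-rho-spezial})) into (\ref{prod-formel-allg-hypergeo}): the left-hand side equals $1$, so the total mass of the kernel is $1$. By (\ref{submult}) the support lies in the compact set $\{r\in C_q:\|r\|_\infty\le s_1+t_1\}$, and $(s,t)\mapsto\delta_s*_{p,l}\delta_t$ is weakly continuous; a bilinear, weakly continuous extension to $M_b(C_q)$ then exists and is unique (\cite{J}). \emph{Commutativity} is proved as in Section 4: the substitution $(v,w)\mapsto(v^T,w^T)$ preserves $SU(q)$, the ball $B_q$, the measures $dv,dw$ and the density $\Delta(I_q-w^*w)^{p-2q}$, while transposing the matrix inside $d$ and $h$ interchanges $s$ and $t$.

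The crux is \emph{associativity}, and I would reduce it to the associativity of $(C_q\times\mathbb R,*_p)$ already established in Section 4, rather than repeat the Carleson argument. The function $W(t,\theta):=\prod_{j=1}^q\cosh^l t_j\cdot e^{il\theta}$ is precisely $\psi_{i\rho,l}^p$, hence a semicharacter of $(C_q\times\mathbb R,*_p)$, whose multiplicative functions are exactly the $\psi_{\lambda,l}^p$. Writing $W_0(t):=\prod_j\cosh^l t_j=W(t,0)$ and $\pi(t,\theta):=t$, the identity $\prod_j\cosh^l d_j\cdot e^{il\,{\rm Im}\ln h}=h^l$ together with the conjugation symmetry $(v,w)\mapsto(\bar v,\bar w)$ (which fixes $d$ and the base measure and sends $h^l$ to $\overline{h^l}$, so the ${\rm Re}$ in (\ref{prod-formel-allg-hypergeo}) may be dropped) yields the measure identity
\begin{equation}\notag
\delta_s*_{p,l}\delta_t=\frac{1}{W_0(s)\,W_0(t)}\,\pi_*\bigl(W\cdot(\delta_{(s,0)}*_p\delta_{(t,0)})\bigr).
\end{equation}
Iterating this relation and using that $W$ is a semicharacter, a direct computation (applying the displayed identity once for each of the two convolution steps) shows that both $(\delta_r*_{p,l}\delta_s)*_{p,l}\delta_t$ and $\delta_r*_{p,l}(\delta_s*_{p,l}\delta_t)$ equal
$$\frac{1}{W_0(r)W_0(s)W_0(t)}\,\pi_*\bigl(W\cdot(\delta_{(r,0)}*_p\delta_{(s,0)}*_p\delta_{(t,0)})\bigr),$$
the intermediate factor $W_0$ at the inner convolution point cancelling precisely because $W$ is multiplicative for $*_p$. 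Associativity of $*_p$ then forces associativity of $*_{p,l}$, uniformly in $p\ge 2q-1$. \textbf{This cancellation, i.e.\ checking that the $l$-twist composes correctly through the multiplicativity of $W$, is the step I expect to require the most care.}

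Finally, the remaining axioms follow as in Section 4. The neutral element is $0\in C_q$: at $s=0$ one has $d(0,t;v,w)=t$ and $h(0,t;v,w)=\Delta(\cosh\underline t)=\prod_j\cosh t_j$, giving $\delta_0*_{p,l}\delta_t=\delta_t$. The involution is the identity $\bar t=t$, whence axiom (4) reduces to the commutativity already shown. For axiom (3), the choice $v=I_q$, $w=-I_q$ (respectively $y_j=-e_j$ when $p=2q-1$) gives $d(t,t;I_q,-I_q)=0$ and $h(t,t;I_q,-I_q)=1$, so $0\in{\rm supp}(\delta_t*_{p,l}\delta_t)$; the converse implication $0\in{\rm supp}(\delta_s*_{p,l}\delta_t)\Rightarrow s=t$ is inherited, via the measure identity above, from the corresponding property of $(C_q\times\mathbb R,*_p)$ (with its involution $(r,a)\mapsto(r,-a)$). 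Continuity of supports is routine. The case $p=2q-1$ is treated throughout by the degenerate formula (\ref{convo-degen-2}), or by the limit $p\downarrow 2q-1$, and uniqueness of the structure is the uniqueness of the weakly continuous bilinear extension. Together with (\ref{prod-formel-allg-hypergeo}), which exhibits each $\phi_\lambda^{p,l}$ as multiplicative, this proves the theorem.
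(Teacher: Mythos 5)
Your proposal is correct, and on the one genuinely nontrivial point --- associativity --- it takes a different route from the one the paper intends. The paper offers no detailed proof of this theorem: it asserts that the hypergroup axioms ``can be shown in the same way as in Section 4,'' i.e.\ by repeating for $(C_q,*_{p,l})$ the argument used for $(C_q\times\mathbb R,*_p)$ (reduction to the group double coset convolution for integer $p\ge 2q$ and $l\in\mathbb Z$, control of the logarithm branch, then Carlson's theorem in $p$, then the limit $p\downarrow 2q-1$), with positivity supplied by Lemma \ref{pos-l} and total mass $1$ by $\phi_{i\rho}^{p,l}\equiv1$ --- exactly the two ingredients you also use. You instead exhibit $\delta_s*_{p,l}\delta_t$ as $\bigl(W_0(s)W_0(t)\bigr)^{-1}\pi_*\bigl(W\cdot(\delta_{(s,0)}*_p\delta_{(t,0)})\bigr)$ with the semicharacter $W(t,\theta)=e^{il\theta}\prod_j\cosh^l t_j$, and inherit associativity from the already proven associativity of $*_p$, the $e^{il\theta}$-factors cancelling against the $\theta$-translation structure of $*_p$. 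This is legitimate and is in fact the mechanism the paper itself deploys, but only later and for a different purpose: Section 6 introduces precisely the partial character $\sigma_l(t,\theta)=e^{il\theta}$, the twisted quotient convolution $\bullet_{p,l}$, and the renormalization identity $\delta_s\bullet_{p,l}\delta_t=\frac{f_0(s)f_0(t)}{f_0}(\delta_s*_{p,l}\delta_t)$ with $f_0=W_0$. So you have effectively front-loaded the Section 6 construction to prove the Section 5 theorem; what this buys is that Carlson's theorem is invoked only once (in Section 4) rather than re-run on $C_q$, and the reality of the pushed-forward measure is handled cleanly by your conjugation symmetry $(v,w)\mapsto(\bar v,\bar w)$, which is also how one must justify taking ${\rm Re}$ in (\ref{prod-formel-allg-hypergeo}). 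Two small points deserve a word more than you give them: in the converse half of axiom (3), $0\in{\rm supp}(\delta_s*_{p,l}\delta_t)$ only yields $(0,\theta)\in{\rm supp}(\delta_{(s,0)}*_p\delta_{(t,0)})$ for \emph{some} $\theta$, and one must use the $\theta$-translation invariance of $*_p$ to reduce to $(0,0)$ and conclude $s=t$; and the support of the pushforward could a priori shrink by cancellation, which is excluded here since $|W|=W_0>0$ and the resulting measure is already known to be positive by Lemma \ref{pos-l}. Neither affects the validity of the argument.
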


It is not difficult to determine the Haar measures of these hypergroups:

\begin{proposition}\label{haar-quotient-hyp}
For integers $q\ge1$,  $p\in [2q-1,\infty[$ and  
 $l\in\mathbb R$ with $|l|\le 1/q$, the Haar measure on the hypergroup $(C_q, *_{p.l})$ is
    given by
\begin{equation}\label{haar-measure-chamber}
d\omega_{p,l}(t)=const\cdot
\prod_{j=1}^q \sinh^{2p-2q+1}t_j  \cosh^{2l+1} t_j\cdot \prod_{1\le i< j\le q}
\bigl| \cosh(2t_i)-\cosh(2t_j)\bigr|^2\> dt.
\end{equation}
\end{proposition}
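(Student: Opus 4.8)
My plan is to recognise the density in \eqref{haar-measure-chamber} as the Heckman--Opdam weight attached to the multiplicity $k(p,q,l)=(p-q-l,\tfrac12+l,1)$ and then to identify this weight with the Haar measure via the self-adjointness of the associated hypergeometric operator. For the algebraic step, write $\delta_k(t):=\prod_{\alpha\in R_+}\bigl|2\sh\tfrac{\langle\alpha,t\rangle}{2}\bigr|^{2k(\alpha)}$ for the weight of $R=2\cdot BC_q$. The roots $2e_i,4e_i$ contribute $|2\sh t_i|^{2k_1}|2\sh 2t_i|^{2k_2}$, which by $\sh 2t_i=2\sh t_i\ch t_i$ equals, up to a constant, $\sh^{2k_1+2k_2}t_i\,\ch^{2k_2}t_i$; since $2k_1+2k_2=2p-2q+1$ and $2k_2=2l+1$, this is the factor $\sh^{2p-2q+1}t_i\,\ch^{2l+1}t_i$ of \eqref{haar-measure-chamber}. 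The roots $2(e_i\pm e_j)$ contribute $|2\sh(t_i-t_j)\cdot 2\sh(t_i+t_j)|^{2k_3}$, and $\sh(t_i-t_j)\sh(t_i+t_j)=\tfrac12(\ch 2t_i-\ch 2t_j)$ together with $k_3=1$ turns this into a constant times $|\ch 2t_i-\ch 2t_j|^2$. Thus \eqref{haar-measure-chamber} is a constant multiple of $\delta_{k(p,q,l)}(t)\,dt$; for $l=0$ it reduces to R\"osler's Haar measure of $(C_q,*_p)$ in Theorem 5.2 of \cite{R2}, the whole $l$-dependence sitting in the extra factor $\prod_j\ch^{2l}t_j$.

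The substantial point is the translation invariance of $\omega_{p,l}:=\delta_{k(p,q,l)}(t)\,dt$, and I would obtain it through the hypergeometric operator rather than by manipulating \eqref{prod-formel-allg-hypergeo} directly. By the eigenfunction characterisation announced in Remark (1) above, to be proved as in Lemma 5.3 of \cite{R2}, the multiplicative functions $\phi_\lambda^{p,l}=F_{BC_q}(i\lambda,k(p,q,l);\cdot)$ are the Weyl-invariant eigenfunctions of the Heckman--Opdam Laplacian $L_k$ for $k=k(p,q,l)$ (see \cite{O1}, \cite{HS}), and $L_k$ commutes with every hypergroup translation $T_sf:=(\delta_s*_{p,l}\delta_{\,\cdot})(f)$. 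Since the involution of $(C_q,*_{p,l})$ is trivial (one checks $0\in\mathrm{supp}(\delta_t*_{p,l}\delta_t)$ exactly as in the proof of the hypergroup axioms in Section 4), the $T_s$ are commuting self-adjoint operators on $L^2(C_q,\omega_{\mathrm{Haar}})$, and $L_k$, acting by a real multiplier on the tempered part of the joint spectrum, is symmetric with respect to $\omega_{\mathrm{Haar}}$. On the other hand $L_k$ is, by standard Heckman--Opdam theory, symmetric with respect to $\delta_k(t)\,dt$. Since the principal part of $L_k$ is the flat Laplacian, a smooth positive density symmetrising $L_k$ must satisfy $\nabla\log\rho=$ (the drift of $L_k$) and is therefore unique up to a positive constant; hence $\omega_{\mathrm{Haar}}=c\,\delta_{k(p,q,l)}(t)\,dt$, which is \eqref{haar-measure-chamber}. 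The probability normalisation is consistent with $\phi_{i\rho}^{p,l}\equiv 1$ for $\rho=\rho(k_{p,l})$ as in \eqref{def-rho-spezial}.

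I expect the real obstacle to be the rigorous affiliation of the concrete operator $L_k$ to the abstract hypergroup --- that its eigenfunctions are exactly the multiplicative functions and that it is self-adjoint for the Haar inner product --- since the spectral theory of these hypergroups is only sketched in Remark (1). To avoid this, one may instead verify invariance first for integers $p\ge 2q$ and $l\in\mathbb Z$, where \eqref{haar-measure-chamber} is the natural measure of the $\chi_l$-twisted (line-bundle) harmonic analysis on $U(p,q)/(U(p)\times U(q))$, the factor $\prod_j\ch^{2l}t_j$ being the square modulus of the trivialising twist $\prod_j\ch^l t_j$ that relates $\psi_{\lambda,l}^p$ to $\phi_\lambda^{p,l}$; one then propagates the invariance identity $\int f_s\,d\omega_{p,l}=\int f\,d\omega_{p,l}$ to real $p>2q-1$ and $l\in\mathbb R$ by Carlson's theorem, exactly as invariance and associativity were extended in Section 4, the degenerate case $p=2q-1$ following by continuity from \eqref{convo-degen}.
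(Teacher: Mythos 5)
Your identification of the density \eqref{haar-measure-chamber} with the Heckman--Opdam weight $\delta_{k(p,q,l)}(t)\,dt$ is correct and is a useful consistency check, but the analytic core of your main argument has a genuine gap that you yourself flag: the affiliation of $L_k$ to the hypergroup. The claim that $L_k$ commutes with the translations $T_s$ and is symmetric with respect to the (a priori unknown) Haar measure requires knowing that the multiplicative functions $\phi_\lambda^{p,l}$ are exactly the eigenfunctions of the relevant invariant operators and that their span is dense enough to diagonalize the $T_s$ --- precisely the statement the paper only \emph{conjectures} in Remark 4.5(1). Your argument also tacitly assumes the Haar measure has a smooth positive density on the open chamber before the drift equation $\nabla\log\rho=b$ can be invoked. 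So as written the main route is circular at the decisive step. The fallback via integer parameters and Carlson is closer to something that works, but note that for $q\ge 2$ the only integer $l$ with $|l|\le 1/q$ is $l=0$, so the invariance identity must be formulated and verified for the \emph{signed} convolution $*_{p,l}$ at general $l\in\mathbb Z$ before continuation, and you would still owe the exponential growth bounds in $l$ and $p$ that Carlson's theorem requires.

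The paper's actual proof avoids all of this and is purely measure-theoretic: it is postponed to Section 6, where the Haar measure of $(C_q\times\mathbb R,*_p)$ is already known for all real $p\ge 2q-1$ from Weil's formula (Proposition 4.4, built on R\"osler's Haar measure in \cite{R2}), and its projection $\tilde\omega_p$, given by \eqref{haar-measure-quotient}, is the Haar measure of the signed quotient hypergroup $(C_q,\bullet_{p,l})$ obtained from the partial character $\sigma_l(t,\theta)=e^{il\theta}$ as in \cite{RV}. The two convolutions on $C_q$ are then related by the renormalization $\delta_s\bullet_{p,l}\delta_t=\frac{f_0(s)f_0(t)}{f_0}\,(\delta_s*_{p,l}\delta_t)$ with $f_0(t)=\prod_j\cosh^l t_j$, and the elementary Lemma 6.1 (conjugation relation plus uniqueness of Haar measures on signed hypergroups) gives $\omega_{p,l}=f_0^2\,\tilde\omega_p$ up to a constant, which is exactly \eqref{haar-measure-chamber}. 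This is essentially the ``square modulus of the trivialising twist'' heuristic in your last paragraph, but implemented directly for all real $p\ge 2q-1$ and $|l|\le 1/q$ without any analytic continuation. If you want to salvage your spectral approach, the missing ingredient is precisely the eigenfunction characterization of multiplicative functions (the analogue of Lemma 5.3 of \cite{R2}), which the paper does not prove.
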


We shall postpone the proof of this result to Section 6.

\begin{example} Consider $q=1$ and $p\ge 2q-1= 1$ as in Section \ref{q1--example-1}. 
We here have
$$\alpha:=k_1+k_2-1/2=p-1\ge0 ,
\quad\quad
 \beta:=k_2-1/2=l,$$
and $\phi_\lambda^{p,l}(t)=\phi_\lambda^{(\alpha,\beta)}(t)$ ($t\in[0,\infty[$)
for the Jacobi functions $\phi_\lambda^{(\alpha,\beta)}$ in \cite{K}.
We now use the parameters $\alpha\ge0$ and $\beta\in\mathbb R$ instead of $p,l$. 
Moreover, for $q=1$, we have $SU(1)=\{1\}$,
 $B_1=\{e^{i\theta}r: \> \theta\in [-\pi,\pi],\> r\in [0,1]\}$ and $dw=r\> dr\> d\theta$
 in the product formula (\ref{prod-formel-allg-hypergeo}). Therefore,
 (\ref{prod-formel-allg-hypergeo}) for $\alpha>0$, the symmetry of the integral
 w.r.t.~$\theta\in [-\pi,\pi]$ and the correct integration constant lead to the product formula
\begin{align}\label{prod-jacobi1}
\phi_\lambda^{(\alpha,\beta)}(s)\cdot \phi_\lambda^{(\alpha,\beta)}(t)=&
\frac{2\alpha}{\pi\cdot(\cosh s\cdot \cosh t)^\beta} 
\int_0^1\int_0^\pi 
 \phi_\lambda^{(\alpha,\beta)}({\rm arcosh}|r e^{i\theta}\sinh t\sinh s \,+\,\cosh t \cosh s|)\notag\\
&
\cdot {\rm Re}\Bigl( (r e^{i\theta}\sinh t\sinh s \,+\,\cosh t \cosh s)^\beta\Bigr) \cdot
(1-r^2)^{\alpha-1} r\> dr\> d\theta
\end{align}
for $s,t\ge0$, $\lambda\in\mathbb C$. For $\alpha=0$ we obtain the degenerate formula
\begin{align}\label{prod-jacobi2}
\phi_\lambda^{(0,\beta)}(s)\cdot \phi_\lambda^{(0,\beta)}(t)=&
\frac{1}{\pi(\cosh s\cdot \cosh t)^\beta} 
\int_0^\pi 
 \phi_\lambda^{(0,\beta)}({\rm arcosh}|r e^{i\theta}\sinh t\sinh s \,+\,\cosh t \cosh s|)\notag\\
&
\cdot {\rm Re}\Bigl( (r e^{i\theta}\sinh t\sinh s \,+\,\cosh t \cosh s)^\beta\Bigr) \> d\theta
\end{align}
For $\beta=0$, these formulas coincide with the well known product 
formulas for Jacobi functions;
 see Section 7 of \cite{K}. However, for $\beta\ne0$,  
(\ref{prod-jacobi1}) and (\ref{prod-jacobi2}) do not seem to be much used in literature. 
In fact, to our knowledge, they are only considered in Section 6 of \cite{RV}.
 It should be noticed that some details in Section 6 are not correct.

Let us compare our product formulas with those of Koornwinder \cite{K}.
Our Eqs.~(\ref{prod-jacobi1}) and (\ref{prod-jacobi2})
 are available for all  $\alpha\ge0$ and $\beta\in\mathbb R$, and they are
 positive for $\alpha\ge0$ and  $|\beta|\le1$.
On the other hand, Koornwinder's formulas in  Section 7 of \cite{K} are
available with positivity for $\alpha\ge\beta\ge -1/2$.
It is well known (see Section 7 of \cite{K},
and  \cite{J} and \cite{BH} for the hypergroup background) that  for all
$\alpha\ge\beta\ge -1/2$,
 Koornwinder's product formulas for the
 $\phi_\lambda^{(\alpha,\beta)}$ are associated with unique commutative
 hypergroup structures on $[0,\infty[$, the  so-called Jacobi hypergroups 
$([0,\infty[, *_{\alpha,\beta})$. As here injectivity of the Jacobi transform
   (as a special case of the injectivity of the  Fourier transform on
         commutative hypergroups in \cite{J}) ensures that
for $\alpha,\beta,s,t$ there is at most one bounded signed measure 
$\mu_{s,t}^{\alpha,\beta}\in M_b([0,\infty[)$ with
$$ \phi_\lambda^{(0,\beta)}(s)\phi_\lambda^{(0,\beta)}(t)=\int\phi_\lambda^{(0,\beta)}(u)
\> d\mu_{s,t}^{\alpha,\beta}(u)$$
for all $\lambda\in\mathbb C$, we conclude that for $\alpha\ge\max(\beta,0)$
and $\beta\ge -1/2$, the 
product formulas (\ref{prod-jacobi1}) and (\ref{prod-jacobi2})
 are equivalent to those of  \cite{K}
 and thus positive.

 This in particular shows that for $q=1$ the range of parameters $p,l$ in Theorem
\ref{positive-prod} with a positive product formula is larger than described there.
 We expect that this holds also for $q\ge2$.

Taking our results and the results of Koornwinder into account, we obtain in
summary that the Jacobi functions  $\phi_\lambda^{(0,\beta)}$ ($\lambda\in
\mathbb C$) admit associated commutative hypergroup structures for the set of
parameters
$$\{(\alpha,\beta)\in\mathbb R^2:\> \alpha\ge\beta\ge -1/2 \quad\text{or}\quad
\alpha\ge0, \> \beta\in[-1,1]\}.$$
By  classical results in the case of Koornwinder (see \cite{BH} for details on
the Jacobi hypergroups) and by
(\ref{haar-measure-chamber}) in our case,
the Haar measures on these hypergroups are given in both cases by
$$const \cdot \sinh^{2\alpha+1}t \cdot \cosh^{2\beta+1}t \quad\quad(t\ge0).$$
\end{example} 

\section{Signed hypergroups on $C_q$}

We show in this section that for all $p\ge 2q-1$ and $l\in\mathbb R$, 
the (not necessarily positive) product formulas of Section 5 for the Heckman-Opdam functions
 $\phi_\lambda^{p,l}$ on $C_q$ are related to some so-called signed hypergroup 
structure $(C_q, \bullet_{p,l})$.
 For this we return to the commutative hypergroups $(C_q\times\mathbb R, *_p)$
of Section 4 for $p\ge 2q-1$ which have the functions $\psi_{\lambda,l}^p$ as multiplicative functions.
As in Section 4, we consider the closed subgroup $G:=\{0\}\times \mathbb R$ of  $(C_q\times\mathbb R, *_p)$,
and identify the quotient $(C_q\times\mathbb R)/G$ with $C_q$. 

For   $l\in\mathbb R$ consider the functions $\sigma_l\in C_b(C_q\times\mathbb R)$
with $\sigma_l(t,\theta):=e^{il\theta}$. They satisfy
$$|\sigma_l(t,\theta)|=1, \quad \sigma_l(t,-\theta)= \overline{\sigma_l(t,\theta)}, 
\quad\text{and}\quad
(\delta_{(0,\tau)}*_p\delta_{(t,\theta)})( \sigma_l)=\sigma_l(t,\theta)\cdot \sigma_l(0,\tau)$$
for $t\in C_q$ and $\theta,\tau\in\mathbb R$, i.e., the
$\sigma_l$ are partial characters on  $(C_q\times\mathbb R, *_p)$ w.r.rt.~$G$ 
in the sense of Definition 4.1 of \cite{RV}. For  $l\in\mathbb R$ we now consider the
 mapping
\begin{align}
 (C_q\times\mathbb R)/G \> \times (C_q\times\mathbb R)/G \> \longrightarrow& \>
M_b(C_q\times\mathbb R)/G ) \notag\\
((s,\theta_1)*G, (t,\theta_2)*G)\> \longmapsto &\> \delta_{(s,\theta_1)*G}
\bullet_{p,l} \delta_{(t,\theta_2)*G}:=
 \notag\\
&:=\overline{ \sigma_l(s,\theta_1)}\cdot \overline{ \sigma_l(t,\theta_2)}\cdot pr(\sigma_l\cdot(
\delta_{(s,\theta_1}*_p\delta_{(t,\theta_2)}))
\notag\end{align}
with $pr$ as the canonical projection $pr:C_q\times\mathbb R\to (C_q\times\mathbb R)/G$ 
as well as  its extension to images of measures. It can be easily checked
 (see also Section 4   of \cite{RV} for a general theory)
that this mapping  is well-defined, i.e.,
 independent of representatives of the cosets, and weakly continuous. Moreover, by  
 Section 4   of \cite{RV}, it can be uniquely extended in a bilinear, weakly continuous way
to an associative convolution $*_l$ on $M_b((C_q\times\mathbb R)/G )$. Moreover, 
$(M_b((C_q\times\mathbb R)/G ), *_l)$ is a commutative Banach-$*$-algebra with respect
 to the total variation norm with the identity $\delta_{(0,0)*G}$ as neutral element where
the involution on $ M_b((C_q\times\mathbb R)/G)$ is inherited from that on
the hypergroup $C_q\times\mathbb R$. We also consider the
the  image
$\tilde\omega_p:=pr(\omega_p)\in M^+((C_q\times\mathbb R)/G)$ 
of the Haar measure $\omega_p$ on  $C_q\times\mathbb R$ in (\ref{haar-measure-full}).
By Theorem 4.6 of \cite{RV}, the quotient $(C_q\times\mathbb R)/G$
with the convolution  $\bullet_{p,l}$ and the measure $\tilde\omega_p$ forms a
so-called commutative signed hypergroup
 $((C_q\times\mathbb R)/G,\bullet_{p,l},\tilde\omega_p)$ with the identity
mapping as involution, i.e., a so-called hermitian commutative signed hypergroup. 
For details on this notion we refer to \cite{R0}, \cite{RV}
and references cited there. We only remark that for a Haar  measure $\omega_p$
in this setting 
we require the conjugation relation
\begin{equation}\label{conjugation}
\int_{(C_q\times\mathbb R)/G} T_xf \cdot g \> d\omega_p=
\int_{(C_q\times\mathbb R)/G} T_xg \cdot f \> d\omega_p
\end{equation}
for all $f,g\in C_c((C_q\times\mathbb R)/G)$ and $x\in(C_q\times\mathbb R)/G$
 where the translates $T_x$ are  defined by $T_xf(y):=(\delta_x\bullet_{p,l}\delta_y)(f)$.
It is well-known (see \cite{J}) that for usual hermitian commutative
hypergroups, the usual Haar measures satisfy  (\ref{conjugation}), and that by
\cite{RV},  Haar  measures on  commutative signed hypergroups in the above
sense are unique up to
a constant.

We now identify $(C_q\times\mathbb R)/G$ with $C_q$ as usual.  In this case, 
 we obtain from (\ref{gen-twodim-convolution}) that for $p>2q-1$
the convolution $\bullet_{p,l}$ on $C_q$ is given by
\begin{align}(\delta_s\bullet_{p,l}\delta_t)(f)&= \int_{C_q\times\mathbb R}
  e^{il\theta}\> d(\delta_{s,0}*_p \delta_{s,0})(t,\theta)\\
&=\frac{1}{\kappa_p}\int_{B_q}\int_{SU(q)} f(d(s,t;v,w))\cdot(arg\> h(s,t;v,w))^l
\cdot
\Delta(I_q-w^*w)^{p-2q}
\> dv\> dw.\notag
\end{align}
Moreover, for $p=2q-1$ we obtain a corresponding formula from
(\ref{convo-degen-2}), and the Haar measure $\omega_p\in M^+(C_q)$ of our
signed hypergroups is independent of $l$ and is given by
\begin{equation}\label{haar-measure-quotient}
d\tilde\omega_p(t)=const\cdot\prod_{j=1}^q \sinh^{2p-2q+1}t_j 
 \cosh t_j\cdot \prod_{1\le i< j\le q}
\bigl| \cosh(2t_i)-\cosh(2t_j)\bigr|^2\> dt.
\end{equation}
Furthermore, using the multiplicative functions $\psi_{\lambda,l}^p$ on the
hypergroups $(C_q\times\mathbb R, *_p)$, we obtain immediately that for
$l\in\mathbb R$,
the functions
$$\tilde\phi_{\lambda}^{p,l}(t):=\prod_{j=1}^q \cosh^l t_j\cdot
F_{BC_q}(i\lambda,k(p,q,l);t) \quad\quad(\lambda\in\mathbb C)$$
are multiplicative functions of our signed quotient hypergroups.

It should be noticed that except for $l=0$, i.e., the case of a classical
quotient hypergroup, our signed quotient hypergroups together their 
 multiplicative functions $\tilde\phi_{\lambda}^{p,l}$ are different, but
 closely related to the convolution of measures on $C_q$ which is induced by
 the product formulas for the functions $\phi_{\lambda}^{p,l}$ in Section 5.

 In fact, we have 
$$\tilde\phi_{\lambda}^{p,l}(t)=\prod_{j=1}^q \cosh^l
 t_j\cdot\phi_{\lambda}^{p,l}(t)$$
and, for $s,t\in C_q $,
$$\delta_s\bullet_{p,l}\delta_t=
\frac{f_0(s)\cdot f_0(t)}{f_0} (\delta_s*_{p,l}\delta_t)$$
 with the positive function
$$f_0(t):=\prod_{j=1}^q \cosh^l t_j=\tilde\phi_{i\rho}^{p,l}(t).$$
 By the following lemma, the Haar measures $\tilde\omega_p$ 
associated with  $\bullet_{p,l}$ are related with the 
Haar measures $\omega_{p,l}$  of the hypergroups
$(C_q,*_{p,l})$ of Section 5 for $|l|\le 1/q$.
This observation together with (\ref{haar-measure-quotient}) then lead
immediately to the Haar measures in
Proposition \ref{haar-quotient-hyp}.

\begin{lemma} Let $X$ be a locally compact Hausdorff space and $h\in C(X)$ a
  positive continuous function on  $X$.
Assume we have two signed hermitian commutative hypergroup structures
$(X, *,\omega_*)$ and $(X,\bullet ,\omega_\bullet)$ on $X$ with the
convolutions $*,\bullet$ and associated Haar measures $\omega_*
,\omega_\bullet$. Assume that the convolutions are related by
$$\delta_x\bullet\delta_y =\frac{h}{h(x)h(y)}\cdot (\delta_x*\delta_y )
\quad\quad \text{for}\quad x,y\in X,$$
then, up to a positive multiplicative constant, the  Haar measures are related
by
$\omega_\bullet=h^2\omega_*$. Moreover, a function $f\in C(X)$ is
multiplicative w.r.t.~the convolution $*$ 
if and only on $f/h$ is multiplicative w.r.t.~$\bullet$.
\end{lemma}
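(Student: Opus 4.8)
The plan is to reduce both assertions to the defining relation between the two convolutions, the point being that the translation operators of the two structures differ only by the weight $h$. Writing $T_x^{*}f(y):=(\delta_x*\delta_y)(f)$ and $T_x^{\bullet}f(y):=(\delta_x\bullet\delta_y)(f)$ for the respective translation operators, the hypothesis gives at once
$$T_x^{\bullet} f(y)=\frac{1}{h(x)h(y)}\,(\delta_x*\delta_y)(hf)=\frac{1}{h(x)h(y)}\,T_x^{*}(hf)(y)$$
for all $f\in C_c(X)$ and $x,y\in X$, since multiplying the measure $\delta_x*\delta_y$ by the function $h/(h(x)h(y))$ amounts to integrating $hf$ and dividing by the constant $h(x)h(y)$. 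This one identity drives everything that follows.

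For the multiplicativity statement I would argue purely formally. Let $f\in C(X)$ and set $g:=f/h$. Then, from the relation above,
$$(\delta_x\bullet\delta_y)(g)=\frac{1}{h(x)h(y)}(\delta_x*\delta_y)(hg)=\frac{1}{h(x)h(y)}(\delta_x*\delta_y)(f).$$
Since $g(x)g(y)=f(x)f(y)/(h(x)h(y))$, the equation $(\delta_x\bullet\delta_y)(g)=g(x)g(y)$ holds for all $x,y$ if and only if $(\delta_x*\delta_y)(f)=f(x)f(y)$ for all $x,y$. This is exactly the assertion that $f$ is $*$-multiplicative precisely when $f/h$ is $\bullet$-multiplicative, and no analysis is needed.

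The substantive part is the identification of the Haar measures, and here I would invoke the characterization of a Haar measure of a hermitian commutative signed hypergroup by the conjugation relation (\ref{conjugation}), together with its uniqueness up to a positive constant, both from \cite{RV}. Put $\nu:=h^2\omega_*$; as $h>0$, this is again a positive measure. It suffices to verify the conjugation relation for $\bullet$, namely $\int_X T_x^{\bullet}f\cdot g\,d\nu=\int_X T_x^{\bullet}g\cdot f\,d\nu$ for all $f,g\in C_c(X)$ and $x\in X$. Substituting the relation for $T_x^{\bullet}$ and the definition of $\nu$, the left-hand side becomes
$$\int_X \frac{1}{h(x)h(y)}\,T_x^{*}(hf)(y)\cdot g(y)\cdot h(y)^2\,d\omega_*(y)=\frac{1}{h(x)}\int_X T_x^{*}(hf)\cdot(hg)\,d\omega_*.$$
Applying the conjugation relation for $*$ to the test functions $hf,hg$ turns this into $\frac{1}{h(x)}\int_X T_x^{*}(hg)\cdot(hf)\,d\omega_*$, which by the identical substitution with $f$ and $g$ interchanged equals $\int_X T_x^{\bullet}g\cdot f\,d\nu$. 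Hence $\nu$ is a Haar measure for $\bullet$, and uniqueness forces $\omega_\bullet=c\,h^2\omega_*$ for some $c>0$.

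The only point requiring care is the bookkeeping of the weight $h$: one must note that $hf,hg$ are again admissible test functions in $C_c(X)$, since $h$ is continuous and $f,g$ have compact support, and that after the substitution the conjugation relation for $*$ is applied to precisely these functions. I expect no genuine analytic obstacle, because the existence and uniqueness of the Haar measure in the signed hypergroup setting is already furnished by \cite{RV}; the lemma is in essence a transport of that Haar measure along the weight $h$.
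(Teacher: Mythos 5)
Your proposal is correct and follows essentially the same route as the paper: you verify the conjugation relation for $h^2\omega_*$ by rewriting $T_x^{\bullet}f$ in terms of $T_x^{*}(hf)$ and applying the conjugation relation for $*$ to the test functions $hf,hg$, then invoke uniqueness of Haar measures on signed hypergroups from \cite{RV}. The multiplicativity statement is handled by the same direct computation the paper leaves as ``clear,'' so there is nothing to add.
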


\begin{proof} Let $\omega_*$ be a Haar measure associated with $*$, and denote
  the translates of $f\in C_c(X)$ by $x\in X$ w.r.t.~ $*$ and $\bullet$ by
  $T^*_xf$ and  $T^\bullet_xf$ respectively.
 Then, by the conjugation relation for  $\omega_*$, we
  have for $f,g\in C_c(X)$, $x\in X$ and $\omega_\bullet:=h^2\omega_*$ that
\begin{align}
\int_X T^\bullet_xf\cdot g\> d\omega_\bullet&=
\int_X   \frac{T^*_x(hf)(y)}{h(x)h(y)} \cdot g(y) h^2(y)\> d\omega_*=
\frac{1}{h(x)} \int_X   T^*_x(hf)(y) \cdot g(y) h(y)\> d\omega_*=\notag\\
&=\frac{1}{h(x)} \int_X h(y)f(y)\cdot T^*_x(hg)(y)\> d\omega_*= ...=
\int_X T^\bullet_xg\cdot f\> d\omega_\bullet.\notag
\end{align}
This yields the conjugation relation for $\omega_\bullet$. As Haar measures on
signed hypergroups are unique up to a constant by \cite{RV}, the first part of the lemma
follows. The second part of the lemma is also clear.
\end{proof}

Let us summarize the preceding observations in Sections 5 and 6.
 In principle, for $l\in\mathbb R$
and $p\ge 2q-1$, we have two  choices to define convolution
structures 
for measures on $C_q$ associated with the hypergeometric functions $F_{BC_q}$,
namely the convolutions of Section 5 for the functions  $F_{BC_q}$ directly as
well as the convolutions  of Section 6 for the functions 
$\tilde\phi_\lambda^{p,l}$. These both convolutions are equal for $l=0$ in
which case $C_q$ carries the usual classical quotient convolution which was 
studied in \cite{R2}.

 For $l\ne0$ with  $l\in[-1/q,1/q]$, we have a positive
convolution. In this case, the convolution  of Section 5 for
 the functions  $F_{BC_q}$ is probability preserving and generates 
classical hypergroup structures, which is not the case for the convolutions
of  Section 6, which are only positive and norm-decreasing. We thus prefer 
the point of view of Section 5 in this case.

On the other hand, for $l\in\mathbb R\setminus[-1/q,1/q]$, 
it is unclear (at least for $q\ge2$) in which cases our convolutions are positive. 
It is also unclear in this case whether the convolutions of Section 5 are
norm-decreasing (w.r.t.~total variation norm), while this is the case always 
for the convolutions of Section 6 by their very construction above.
 We thus prefer the convolution of Section 6 in those cases,
 where no positivity is known.

\end{document}